\makeatletter \@addtoreset{equation}{section}
\begin{document}

\newcommand{\E}{\mathbb{E}}
\newcommand{\PP}{\mathbb{P}}
\newcommand{\RR}{\mathbb{R}}

\newcommand{\Dt}{\D t}
\newcommand{\bX}{\bar X}
\newcommand{\bx}{\bar x}
\newcommand{\by}{\bar y}
\newcommand{\bp}{\bar p}
\newcommand{\bq}{\bar q}

\newtheorem{theorem}{Theorem}[section]
\newtheorem{lemma}[theorem]{Lemma}
\newtheorem{coro}[theorem]{Corollary}
\newtheorem{defn}[theorem]{Definition}
\newtheorem{assp}[theorem]{Assumption}
\newtheorem{expl}[theorem]{Example}
\newtheorem{prop}[theorem]{Proposition}
\newtheorem{rmk}[theorem]{Remark}

\newcommand\tq{{\scriptstyle{3\over 4 }\scriptstyle}}
\newcommand\qua{{\scriptstyle{1\over 4 }\scriptstyle}}
\newcommand\hf{{\textstyle{1\over 2 }\displaystyle}}
\newcommand\athird{{\scriptstyle{1\over 3 }\scriptstyle}}
\newcommand\hhf{{\scriptstyle{1\over 2 }\scriptstyle}}

\newcommand{\eproof}{\indent\vrule height6pt width4pt depth1pt\hfil\par\medbreak}

\def\a{\alpha} \def\g{\gamma}
\def\e{\varepsilon} \def\z{\zeta} \def\y{\eta} \def\o{\theta}
\def\vo{\vartheta} \def\k{\kappa} \def\l{\lambda} \def\m{\mu} \def\n{\nu}
\def\x{\xi}  \def\r{\rho} \def\s{\sigma}
\def\p{\phi} \def\f{\varphi}   \def\w{\omega}
\def\q{\surd} \def\i{\bot} \def\h{\forall} \def\j{\emptyset}

\def\be{\beta} \def\de{\delta} \def\up{\upsilon} \def\eq{\equiv}
\def\ve{\vee} \def\we{\wedge}

\def\F{{\cal F}}
\def\T{\tau} \def\G{\Gamma}  \def\D{\Delta} \def\O{\Theta} \def\L{\Lambda}
\def\X{\Xi} \def\S{\Sigma} \def\W{\Omega}
\def\M{\partial} \def\N{\nabla} \def\Ex{\exists} \def\K{\times}
\def\V{\bigvee} \def\U{\bigwedge}

\def\1{\oslash} \def\2{\oplus} \def\3{\otimes} \def\4{\ominus}
\def\5{\circ} \def\6{\odot} \def\7{\backslash} \def\8{\infty}
\def\9{\bigcap} \def\0{\bigcup} \def\+{\pm} \def\-{\mp}
\def\[{\langle} \def\]{\rangle}

\def\proof{\noindent{\it Proof. }}
\def\tl{\tilde}
\def\trace{\hbox{\rm trace}}
\def\diag{\hbox{\rm diag}}
\def\for{\quad\hbox{for }}
\def\refer{\hangindent=0.3in\hangafter=1}

\newcommand\wD{\widehat{\D}}

\thispagestyle{empty}

\title{
\bf A note on convergence and stability of the truncated Milstein method for stochastic differential equations}

\author{Weijun Zhan\textsuperscript{a}\quad
Yanan Jiang\textsuperscript{b}\thanks{Corresponding author, Email: y.n.jiang@qq.com }\quad
Wei Liu\textsuperscript{b}
\\
\textsuperscript{a}Department of Mathematics, \\Shanghai University, Shanghai, China, 200444\\
\textsuperscript{b}Department of Mathematics, \\Shanghai Normal University, Shanghai, China, 200234
}
\date{}
\maketitle
\begin{abstract}
Some new techniques are employed to release significantly the requirements on the step size of the truncated Milstein method, which was originally developed in Guo, Liu, Mao and Yue (2018). The almost sure stability of the method is also investigated. Numerical simulations are presented to demonstrate the theoretical results.

\medskip \noindent
{\small\bf Key words: } Stochastic differential equation, Truncated Milstein method, Super-linear growth condition, Strong convergence rate, Almost sure stability.

\medskip \noindent
{\small\bf MAS Classification (2000):}  65C20

\end{abstract}
\section{Introduction}
The classical Milstein method was proposed in \cite{Milstein1974} with the merit of the convergence rate of one, when both the drift and diffusion coefficients of stochastic differential equations (SDEs) satisfy the global Lipschitz and linear growth conditions. However, the classical explicit methods, including the Milstein and Euler-Maruyama methods, are of divergence, when such usual conditions are disturbed \cite{HJK2011a}.
\par
To tackle the super-linearities in the coefficients, one approach is to construct implicit methods. We just mention some of the works here \cite{HMS2013,MS2013a,Mil01,WGW2012} and refer the readers to the references therein.
\par
On the other hand, due to the simple structure and the avoidance of solving non-linear equation systems in each iteration \cite{Hig2011a}, explicit methods still play an important role in the numerical approximates to SDEs. The tamed Euler method was proposed and generalised in \cite{Hut02} and \cite{HJ2015a}. The proof for the method was modified in \cite{Sab2013}. The tamed Milstein methods were developed in \cite{WG2013} and \cite{KS2017a} for SDEs driven by Brownian motion and L\'evy noise, respectively.
\par
Another explicit method, called the truncated Euler method, was originally proposed in \cite{Mao2015,Mao2016a}. The proof for the method was modified in \cite{HLM2018a}. Employing the idea in the two original works, some new truncated Euler methods were proposed by using different truncating functions more recently \cite{LX2018a,li2018explicit,YL2018a}. The truncated Milstein method was developed in \cite{GLMY2018}. However, the requirements on the step size for the truncated Milstein method in that work are very restrictive, which brings some difficulties in the applications of the method.
\par
In this paper, we release the constrains on the step size for the Milstein method using some different techniques in the proofs. In addition, the almost sure stability of the Milstein method is investigated.
\par
This paper is organized as follows. Section \ref{secmath} gives the necessary notations and mathematical preliminaries.
The result on the finite time convergence with less constraint step size is presented in section \ref{secconv}. Some simulations are given to illustrate the theoretical results. Section \ref{secas} sees the almost sure stability of the truncated Milstein method. Numerical examples are also used to demonstrate the theorem.

\section{Mathematical Preliminaries}\label{secmath}

Throughout this paper, unless otherwise specified, let $(\Omega , \F, \PP)$ be a complete probability space with a filtration $\left\{\F_t\right\}_{t \ge 0}$ satisfying the usual conditions (that is, it is right continuous and increasing while $\F_0$ contains all $\PP$-null sets). Let $\E$ denote the expectation corresponding to $\PP$.
If $A$ is a vector or matrix, its transpose is denoted by $A^T$.
Let $B(t) = (B^1(t), B^2(t), ..., B^m(t))^T$ be
an $m$-dimensional Brownian motion defined on the space.
If $x \in \RR^d$, then $|x|$ is the Euclidean norm.
For two real numbers $a$ and $b$, set $a\ve b=\max(a,b)$ and $a\we b=\min(a,b)$. If $G$ is a set, its indicator function is denoted by $I_G$, namely $I_G(x)=1$ if $x\in G$ and $0$ otherwise.
\par
Consider a $d$-dimensional SDE
\begin{equation}\label{sde}
dx(t) = \mu(x(t))dt + \s(x(t))dB(t), \quad t\ge 0,
\end{equation}
with the initial value $x(0)=x_0\in\RR^d$, where
$
\mu: \RR^d \to \RR^d,
\quad
\s: \RR^{d\times m} \to\RR^d,
$
and $ x(t)= (x^1(t), x^2(t), ..., x^d(t))^T$.
In some of the proofs in this paper, we need the more specified notation that $\mu=(\mu_1, \mu_2, ..., \mu_d)^T$, $\mu_i: \RR^d \rightarrow \RR$ for $i=1,2,...,d$,
and $\s = (\s_{1,j}, \s_{2,j},...,\s_{d,j})^T$, $\s_{i,j}: \RR^d \rightarrow \RR$ for $j=1,2,...,m$.
For $j_1, j_2 = 1,...,m$, define
\begin{equation}\label{Lg}
L^{j_1}\s_{j_2}(x) = \sum_{l=1}^d \s_{l,j_1}(x)\frac{\partial \s_{j_2} (x)}{\partial x^l} .
\end{equation}
\noindent
For $l= 1,2,...d$, set
\begin{equation*}
\mu'_l(x) = \left( \frac{\partial \mu_l(x)}{\partial x^{1}}, \frac{\partial \mu_l(x)}{\partial x^{2}}, ..., \frac{\partial \mu_l(x)}{\partial x^{d}} \right)~\text{and}~\mu''_l(x) = \left( \frac{\partial^2 \mu_l(x) }{\partial x^j \partial x^i} \right)_{i,j},~i,j=1,2,...,d.
\end{equation*}
\noindent
And for $n= 1,2,...m$, $l = 1,2,...,d$, set
\begin{equation*}
\s'_{l,n}(x) = \left( \frac{\partial \s_{l,n}(x)}{\partial x^{1}}, \frac{\partial \s_{l,n}(x)}{\partial x^{2}}, ..., \frac{\partial \s_{l,n}(x)}{\partial x^{d}} \right)
~\text{and}~\s''_{l,n}(x) = \left( \frac{\partial^2 \s_{l,n}(x) }{\partial x^j \partial x^i} \right)_{i,j},~i,j=1,2,...,d.
\end{equation*}
\noindent
For $j = 1, ..., m$ and $l = 1, ..., d$, define the derivative of the vector $\s_j(x)$ with respect to $x^l$ by
\begin{equation*}
G_j^{l}(x):=\frac{\partial}{\partial x^l} \s_j(x) = \left( \frac{\partial \s_{1,j}(x) }{\partial x^l}, \frac{\partial \s_{2,j}(x) }{\partial x^l}, ...,\frac{\partial \s_{d,j}(x) }{\partial x^l}\right)^T.
\end{equation*}

We impose some standing hypotheses in this paper.
\begin{assp}\label{fgpoly}
There exist constants $K_1 > 0$ and $r > 0$ such that
\begin{equation*}
|\mu(x) - \mu(y)| \ve |\s(x) - \s(y)| \ve \left| L^{j_1} \s_{j_2}(x) -  L^{j_1} \s_{j_2}(y) \right|\leq K_1 (1 + |x|^r + |y|^r) |x - y|
\end{equation*}
for all $x,y \in \RR^d$ and $j_1, j_2=1,2,...m$.
\end{assp}
\begin{assp}\label{KhasminskiiCond}
For every $\overline{p}\ge 1$, there exists a positive constant $K_2$ (dependent on $p$) such that
\begin{equation*}
\left\[ x - y, \mu(x) - \mu(y) \right\] + (2\overline{p}-1)  \left|\s(x) - \s(y) \right|^2 \leq K_2 |x - y|^2
\end{equation*}
for all $x,y \in \RR^d$.
\end{assp}

It is not hard to derive from Assumption \ref{fgpoly} and \ref{KhasminskiiCond}, we can obtain that for all $x \in \RR^d$ and $\l_1\geq 1$
\begin{equation}\label{fgpolyext}
|\mu(x)| \ve |\s(x)| \leq \l_1( 1 + |x|^{r+1}),
\end{equation}
and for any $p\geq \overline{p}\geq 1$
\begin{equation}\label{KhasminskiiCondext}
\left\[ x, \mu(x) \right\] + (2p-1)  \left| \s(x) \right|^2 \leq \l_2(1 + |x|^2),
\end{equation}
 where $\l_2$ is a positive constant dependent on $p$.
\begin{assp}\label{dfgspoly}
 Assume that for $j= 1,2,...m$ and $l = 1,2,...,d$, there exists a positive constant $\l_3$ such that
\begin{equation}
|\mu_l'(x)| \ve |\mu_l''(x)| \ve |\s_{l,j}'(x)| \ve |\s_{l,j}''(x)| \leq \l_3  (1 + |x|^{r+1} ).
\end{equation}
\end{assp}

To define the truncated Milstein method, we first choose a strictly increasing continuous function $\omega : \RR_{+} \rightarrow \RR_{+}$ such that $\omega(u) \rightarrow \infty$ as $u \rightarrow \infty$ and
\begin{equation}\label{formofmu}
\sup_{|x|\leq u} (|\mu(x)| \ve |\s_j(x)| \ve |G_j^{l}(x)| ) \leq \omega(u)
\end{equation}
for any $u \geq 2$, $j = 1, ..., m$ and $l = 1, ..., d$.
Denote the inverse function of $\omega$ by $\omega^{-1}$. We see that $\omega^{-1}$ is a strictly increasing continuous function from $[\omega(0),+\infty)$ to $\RR_{+}$. We also choose a strictly decreasing function
$h: (0,1] \rightarrow (0,+\infty)$ and a constant $\overline{h}\geq 1$ such that
\begin{equation}\label{deltahdelta}
\lim_{\Delta \rightarrow 0}h(\Delta) = \infty ~~\text{and}~~\Delta^{1/4} h(\Delta) \leq \overline{h},~~ \forall \Delta \in (0,1].
\end{equation}
Before we proceed, let us make an useful remark.
\begin{rmk}
In Mao \cite{Mao2015} where the truncated EM was originally developed, it was required to choose a number $\D^*\in (0,1]$ and a strictly decreasing function $h: (0,\D^*] \rightarrow [\omega(0),\8)$
such that
\begin{equation}\label{h1}
h(\Delta^*) \geq \omega(1),~~\lim_{\Delta \rightarrow 0}h(\Delta) = \infty ~~\text{and}~~\Delta^{1/4} h(\Delta) \leq 1,~~ \forall \Delta \in (0,\D^*].
\end{equation}
here, we simply let $\D^*=1$ and remove condition $h(\D^*)\geq \omega(2)$ while we also replace condition $\D^{1/4}h(\D)\leq 1$ by a weaker one $\D^{1/4}h(\D) \leq \widehat{h}$.
In other words, we have made the choice of function $h$ more flexible. We emphasize that such changes to not make any effect on the results in Guo \cite{GLMY2018}. In fact, condition
$h(\D^*)\geq \omega(2)$ was only used to prove [1, Lemma 2.3]. But, in view of Lemma 2.3, we see that the constant $2\a_1$ in [1, Lemma 2.3] is now replaced by another constant $\widehat{K}$ which
does not affect any other results in [1]. It is also easy to check that replacing $\D^{1/4} h(\D) \leq 1$ by $\D^{1/4} h(\D) \leq \widehat{h}$ does not make any effect on
the other results in [1]. Similarly, we see that these change do not affect any result in [1] either.
\end{rmk}

For a given step size $\Delta \in (0,1)$ and any $x \in \RR^d$, let us define a mapping $\pi_\D$ from $\RR^d$ to the closed ball $\{ x\in \RR^d :|x|\leq \omega^{-1}(h(\D))\}$ by
$$
\pi_\D(x)=(|x|\wedge \omega^{-1}(h(\D)) )\frac{x}{|x|},
$$
where we set $x/|x|=0$ when $x=0$. That is, $\pi_\D$ will map $x$ to itself when $|x|\leq \omega^{-1}(h(\D))$ and to $\omega^{-1}(h(\D)) x/|x|$ when $|x| \geq \omega^{-1}(h(\D))$.
We then define the truncated functions, for any $j = 1, ..., m,~l = 1, ..., d$,
\begin{equation*}
\tilde{\mu}(x) = \mu\left( \pi_\D(x) \right),\quad
\tilde{\s}(x) = \s\left( \pi_\D(x) \right)\quad \text{and}
\quad\tilde{G}_j^{l}(x) = G_j^{l}\left( \pi_\D(x) \right).
\end{equation*}
 It is not hard to see that for any $x \in \RR^d$
\begin{equation}\label{hdeltabdfgG}
|\tilde{\mu}(x)| \ve |\tilde{\s}(x)| \ve |\tilde{G}_j^{l}(x)| \leq \omega(\omega^{-1}(h(\Delta))) = h(\Delta).
\end{equation}
That is to say, all the truncated functions $\tilde{\mu}$, $\tilde{\s}$ and $\tilde{G}_j^{l}$ are bounded although $\mu,\s$ and $G_j^{l}$ may not.
Therefore, the truncated Milstein method is defined by
\begin{eqnarray}\label{theTMmethod}
Y_{k+1} \nonumber &=& Y_{k} + \tilde{\mu}(Y_{k}) \Delta +  \tilde{\s}(Y_{k}) \Delta B^j_k
\nonumber \\
&&+ \frac{1}{2} \sum_{j_1= 1}^m \sum_{j_2 = 1}^m  L^{j_1}\tilde{\s}_{j_2}(Y_k)  (\Delta B_k^{j_1} \Delta B_k^{j_2} - \delta _{j_1 j_2} \Delta).
\end{eqnarray}
where $ \delta _{j_1 j_2}=1$ if $j_1=j_2$, else $\delta _{j_1 j_2}=0$. Let us now form two versions of the continuous-time truncated Milstein solutions. The first one is defined by
\begin{equation}
\bar Y(t)=\sum_{k=0}^{\8} Y_{k} I_{[t_k,t_{k+1})}(t), \quad t\geq 0.
\end{equation}
This is simple step process so its sample paths are not continuous. we will refer this as the continuous-time step-process truncated Milstein solution.
The other one is defined by
\begin{equation}\label{continuousTM}
	Y(t) = \bar{Y}(t) + \int_{t_k}^{t} \tilde{\mu}(\bar{Y}(s))ds +  \int_{t_k}^{t} \tilde{\s}(\bar{Y}(s)) dB(s)
+ \sum_{j_1=1}^m \int_{t_k}^t \sum_{j_2=1}^m L^{j_1}\tilde{\s}_{j_2} (\bar{Y}(s)) \Delta B^{j_2}(s)  d B^{j_1}(s),
\end{equation}
where $\Delta B^{j_2}(s) = \sum_{k=0}^{\infty} I_{\{t_k \leq s < t_{k+1} \}} (B^{j_2}(s) - B^{j_2}(t_k))$.

\section{Finite time convergence}\label{secconv}
To point out the restrictive condition imposed in \cite{GLMY2018}, we cites its main result on the convergence rate.
\begin{theorem}\label{T3.1}
(\cite{GLMY2018}). Let Assumptions \ref{fgpoly}, \ref{KhasminskiiCond} and \ref{dfgspoly} hold. Furthermore, assume that for any given $q\geq 1$, there exists a $p\in (q,+\8)$
and a $\D^*$ satisfying (\ref{h1}). In addition, if
\begin{equation}\label{h}
h(\D)\geq \w\big((\D^q(h(\D))^{2q})^{-1/(p-q)} \big)
\end{equation}
holds for all sufficiently small $\D\in (0,\D^*]$, then for any fixed $T=N\D>0$ and sufficiently small $\D\in(0,\D^*]$,
\begin{equation}
\E|x(T)-Y_N|^{2q} \leq K \D^{2q}(h(\D))^{4q}
\end{equation}
holds, where $K$ is a positive constant independent of $\D$.
\end{theorem}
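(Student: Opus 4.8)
The plan is to follow the route that is by now standard for strong convergence of truncated explicit schemes, with the extra care that the Milstein correction term demands. I would work throughout with the continuous interpolation $Y(t)$ of (\ref{continuousTM}), which on each subinterval $[t_k,t_{k+1})$ is an It\^o process whose coefficients are bounded by $h(\Delta)$ thanks to (\ref{hdeltabdfgG}). The first ingredient is a set of moment bounds that are uniform in $\Delta$: using the Khasminskii-type estimate (\ref{KhasminskiiCondext}) --- and exploiting that Assumption \ref{KhasminskiiCond} is assumed for every $\overline{p}\ge1$, so that arbitrarily high moments are available --- I would show that $\sup_{0\le t\le T}\E|x(t)|^{2p}$ and $\sup_{\Delta}\sup_{0\le t\le T}(\E|Y(t)|^{2p}\vee\E|\bar Y(t)|^{2p})$ are finite. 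The point of choosing $p>q$ (indeed as large as needed) is to keep these high moments in reserve for the tail estimate below.

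The heart of the argument is a stopped error estimate. Put $R=\omega^{-1}(h(\Delta))$ and define $\tau_R=\inf\{t\ge0:|x(t)|\ge R\}$, $\rho_R=\inf\{t\ge0:|\bar Y(t)|\ge R\}$ and $\theta_R=\tau_R\wedge\rho_R$. On $[0,\theta_R)$ the truncation map $\pi_\Delta$ is the identity on $\bar Y$, so $\tilde\mu(\bar Y)=\mu(\bar Y)$, $\tilde\sigma(\bar Y)=\sigma(\bar Y)$ and the scheme is a genuine Milstein step. I would then apply It\^o's formula to $|x(t\wedge\theta_R)-Y(t\wedge\theta_R)|^{2q}$. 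The martingale parts vanish in expectation; the local Lipschitz bound of Assumption \ref{fgpoly} produces a $|x-Y|^{2q}$ term that feeds Gronwall, together with one-step remainders coming from the splitting $\mu(x)-\mu(\bar Y)=[\mu(x)-\mu(Y)]+[\mu(Y)-\mu(\bar Y)]$ and its analogue for $\sigma$. The decisive manipulation is to Taylor-expand $\sigma(Y)-\sigma(\bar Y)$ about $\bar Y$: its first-order part $\sum_l G_j^{l}(\bar Y)(Y-\bar Y)^l$ carries a stochastic contribution that is exactly matched, via the definition (\ref{Lg}), by the discrete correction $\frac12\sum_{j_1,j_2}L^{j_1}\sigma_{j_2}(\bar Y)(\Delta B^{j_1}\Delta B^{j_2}-\delta_{j_1j_2}\Delta)$, so that the diffusion remainder becomes genuinely second order. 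For the drift remainder one uses that the martingale part of $Y-\bar Y$ has zero conditional mean, whence $\E[\mu(Y)-\mu(\bar Y)]$ is of order $\Delta(h(\Delta))^2$ rather than the naive $\Delta^{1/2}h(\Delta)$. The second-derivative bounds of Assumption \ref{dfgspoly}, evaluated at the truncated and hence bounded argument, control all remainders, and Gronwall's inequality yields
\begin{equation*}
\E\,|x(T\wedge\theta_R)-Y(T\wedge\theta_R)|^{2q}\le C\,\Delta^{2q}(h(\Delta))^{4q}.
\end{equation*}

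It remains to discard the stopping time. I would write $\E|x(T)-Y_N|^{2q}$ as the sum of its restrictions to $\{\theta_R>T\}$ and $\{\theta_R\le T\}$; the first is dominated by the stopped estimate. For the second I would apply Young's inequality with conjugate exponents $p/q$ and $p/(p-q)$ and a free parameter tuned to $\Delta^{2q}(h(\Delta))^{4q}$, together with the Markov bound $\PP(\theta_R\le T)\le C R^{-2p}$ furnished by the uniform $2p$-th moment bounds. This leaves a tail term of order $(\Delta^{2q}(h(\Delta))^{4q})^{-q/(p-q)}R^{-2p}$. Here hypothesis (\ref{h}) enters decisively: applying the increasing function $\omega^{-1}$ to (\ref{h}) gives $R=\omega^{-1}(h(\Delta))\ge(\Delta^{q}(h(\Delta))^{2q})^{-1/(p-q)}$, which is precisely the inequality that pushes the tail term below $\Delta^{2q}(h(\Delta))^{4q}$. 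The two contributions then combine into the claimed bound with a constant $K$ independent of $\Delta$.

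The step I expect to be the main obstacle is the Milstein cancellation inside the stopped estimate. Making it rigorous at the level of the $2q$-th moment requires expanding the truncated coefficients by Taylor's theorem, identifying the first-order diffusion term with the discrete double-integral correction term by term, and then showing that every leftover is uniformly $O(\Delta^{2q}(h(\Delta))^{4q})$. The delicate part is the bookkeeping of the $h(\Delta)$-dependent factors: the derivatives of $\mu$ and $\sigma$ grow polynomially by Assumption \ref{dfgspoly}, but they are always evaluated at the bounded truncated argument $\pi_\Delta(\cdot)$, so each such factor may be replaced by a suitable power of $h(\Delta)$; keeping these powers matched to the target rate, and ensuring that the Gronwall constant remains independent of $\Delta$, is what ultimately pins down the exponent $4q$ in the conclusion.
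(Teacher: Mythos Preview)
Theorem~\ref{T3.1} is not proved in this paper: it is quoted from \cite{GLMY2018} precisely to set up the contrast with the new Theorem~\ref{thmmain}, so there is no proof here to compare your attempt against directly. That said, your sketch is a faithful outline of the original \cite{GLMY2018} argument and the logic is sound: a stopped It\^o--Gronwall estimate with $R=\omega^{-1}(h(\Delta))$, the Milstein cancellation obtained by Taylor expanding $\sigma(Y)-\sigma(\bar Y)$ about $\bar Y$, and a Young--Markov tail bound in which condition~(\ref{h}) is exactly the inequality $R\ge (\Delta^{q}(h(\Delta))^{2q})^{-1/(p-q)}$ needed to push the tail below $\Delta^{2q}(h(\Delta))^{4q}$.

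Two places where your write-up would need tightening. First, the $|x-Y|^{2q}$ term that feeds Gronwall comes from the one-sided condition of Assumption~\ref{KhasminskiiCond}, not from the polynomial Lipschitz bound of Assumption~\ref{fgpoly}; the latter is used only for remainder terms. Second, the sentence ``the martingale part of $Y-\bar Y$ has zero conditional mean, whence $\E[\mu(Y)-\mu(\bar Y)]$ is of order $\Delta(h(\Delta))^{2}$'' compresses a genuinely delicate step: what one actually needs is a bound of order $\Delta^{2q}$ on
\[
\E\int_0^{t\wedge\theta}\Big|\Big\langle x(s)-Y(s),\ \mu'(\bar Y(s))\sum_{j}\int_{t_k}^{s}\tilde\sigma_j(\bar Y)\,dB^j\Big\rangle\Big|^{q}\,ds,
\]
and this requires decomposing $x(s)-Y(s)$ itself and exploiting the $\F_{t_k}$-measurability of $\bar Y$ and $\mu'(\bar Y)$. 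This is exactly the estimate the present paper disposes of by invoking (3.23) of \cite{WG2013}; a mere conditional-mean observation is not enough.

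For perspective, the contribution of the paper you are reading is to reorganise the tail step so that (\ref{h}) is no longer needed: in the proof of Theorem~\ref{thmmain} the stopping radius $R$ is decoupled from $h(\Delta)$, the truncation discrepancy $|\sigma_i(Y)-\tilde\sigma_i(Y)|^{2q}$ is estimated directly via H\"older and Chebyshev to yield the additional term $(\omega^{-1}(h(\Delta)))^{-(2p-2qr-2q)}$, and one passes to the limit $R\to\infty$ by Fatou. Your outline is the ``old'' route; the paper's novelty is the ``new'' one.
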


We start this section by presenting our main result.

\begin{theorem}\label{thmmain}
Let Assumptions \ref{fgpoly}, \ref{KhasminskiiCond} and \ref{dfgspoly} hold and assume there exists $q \in [1,p)$ such that
\begin{equation}\label{p}
p>(1+r)q,
\end{equation}
then for any real number $R>|x_0|$ and $\Delta \in (0, 1]$
\begin{equation}\label{strongerror}
\E \left| x(T) - Y_N \right|^{2q} \leq C \big(\Delta^{2q}(h(\Delta))^{4q} \ve(\omega^{-1}(h(\D)))^{-{(2p-2qr-2q)}}\big),
\end{equation}
 where $C$ stands for a generic positive real constant dependent on $p$ but independent of $\Delta$ and its values may change between occurrences.
\end{theorem}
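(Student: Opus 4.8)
The plan is to compare the exact solution $x(t)$ with the continuous-time Milstein solution $Y(t)$ of \eqref{continuousTM} through an It\^o expansion of $|x(t)-Y(t)|^{2q}$, splitting every coefficient difference into a ``Lipschitz'' part that Gronwall's inequality absorbs and a ``truncation'' part supported on $\{|\bar Y|>\w^{-1}(h(\D))\}$, while the Milstein correction term supplies the order-one discretisation rate. The two entries in the maximum in \eqref{strongerror} arise from these two sources respectively. As a preliminary I would collect uniform moment bounds: using \eqref{KhasminskiiCondext} with a standard It\^o--Gronwall argument gives $\sup_{0\le t\le T}\E|x(t)|^{2p}\le C$, and \eqref{hdeltabdfgG} together with Assumption \ref{KhasminskiiCond} yields $\sup_{\D\in(0,1]}\sup_{0\le t\le T}\big(\E|Y(t)|^{2p}\ve\E|\bar Y(t)|^{2p}\big)\le C$, uniformly in $\D$; the hypothesis $R>|x_0|$ enters here, through the auxiliary stopping times used to make these estimates rigorous. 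I would also record the one-step continuity estimate $\E|Y(t)-\bar Y(t)|^{2q}\le C\D^{q}(h(\D))^{2q}$ for $t\in[t_k,t_{k+1})$, which is immediate from \eqref{continuousTM} since $\tl\mu,\tl\s$ and $L^{j_1}\tl\s_{j_2}$ are all bounded by powers of $h(\D)$ via \eqref{hdeltabdfgG}.

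Writing $e(t)=x(t)-Y(t)$ and applying It\^o's formula to $|e(t)|^{2q}$, the drift-type term decomposes as $\mu(x)-\tl\mu(\bar Y)=[\mu(x)-\mu(\bar Y)]+[\mu(\bar Y)-\tl\mu(\bar Y)]$, and similarly for $\s$ and $L^{j_1}\s_{j_2}$. The first bracket is handled by the Khasminskii condition \eqref{KhasminskiiCondext}/Assumption \ref{KhasminskiiCond} (after correcting $\bar Y$ to $Y$ using the continuity estimate and the polynomial growth of Assumption \ref{fgpoly}), producing the Gronwall factor. The second bracket is the \emph{truncation error}: since $\tl\mu(\bar Y)=\mu(\pi_\D(\bar Y))$ and $\pi_\D$ is the identity on $\{|\cdot|\le \w^{-1}(h(\D))\}$, it vanishes unless $|\bar Y|>\w^{-1}(h(\D))$, where Assumption \ref{fgpoly} bounds it by $C|\bar Y|^{r+1}$. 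After a Young inequality this reduces to controlling $\E\big[|\bar Y|^{2q(r+1)}I_{\{|\bar Y|>\w^{-1}(h(\D))\}}\big]$, and H\"older's inequality with the $2p$-th moment bound, followed by Markov's inequality, gives
\[\E\big[|\bar Y|^{2q(r+1)}I_{\{|\bar Y|>\w^{-1}(h(\D))\}}\big]\le C\,(\w^{-1}(h(\D)))^{-(2p-2qr-2q)}.\]
Here the hypothesis \eqref{p}, $p>(1+r)q$, is exactly what makes the H\"older exponent admissible and the power $2p-2qr-2q$ positive, so this tail genuinely decays; this is the second term of \eqref{strongerror}.

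For the discretisation rate I would analyse the diffusion difference $\s(x(s))-\tl\s(\bar Y(s))$ on the set where truncation is inactive, where it equals $\s(x(s))-\s(\bar Y(s))$. A Taylor expansion in $x(s)-\bar Y(s)$, whose leading stochastic part over a step is $\sum_{j_2}\s_{j_2}(\bar Y)\,\D B^{j_2}$, shows that its first-order term is cancelled precisely by the Milstein correction $\sum_{j_1,j_2}L^{j_1}\tl\s_{j_2}(\bar Y)\D B^{j_2}(s)$ in \eqref{continuousTM}; the surviving second-order term is estimated by the second-derivative growth bound of Assumption \ref{dfgspoly}. Since every remaining coefficient is bounded on the truncation ball by a power of $h(\D)$ and the cancellation lowers the stochastic order, a Burkholder--Davis--Gundy estimate followed by Gronwall's inequality produces the interior rate $C\D^{2q}(h(\D))^{4q}$, the first term of \eqref{strongerror}. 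Taking the maximum of the two contributions, and noting that, unlike Theorem \ref{T3.1}, no balancing condition such as \eqref{h} on $\D$ is imposed, finishes the proof.

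The main obstacle is this Milstein local-error step: in contrast to the truncated Euler method one must retain and estimate the iterated stochastic integrals, and obtaining the sharp power $(h(\D))^{4q}$ rather than a cruder bound requires carefully pairing the correction coefficients $L^{j_1}\tl\s_{j_2}$, bounded by $h(\D)^2$ through \eqref{hdeltabdfgG}, against the $2q$-th moment, while the second-order Taylor remainder forces the use of the second-derivative hypothesis, Assumption \ref{dfgspoly}. A secondary but delicate point is the exponent bookkeeping in the truncation term, where the polynomial-growth degree $r$ must be tracked through H\"older's inequality so that the decay rate lands exactly on $2p-2qr-2q$, and where one must verify that the step-size restriction of \cite{GLMY2018} is genuinely removed by keeping both contributions inside the maximum.
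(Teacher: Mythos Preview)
Your overall strategy matches the paper's: It\^o's formula on $|e(t)|^{2q}$, the Khasminskii condition (Assumption~\ref{KhasminskiiCond}) for the Gronwall core, the H\"older--Markov argument on $\{|\bar Y|>\w^{-1}(h(\D))\}$ for the truncation tail $(\w^{-1}(h(\D)))^{-(2p-2qr-2q)}$, and a Taylor expansion with the Milstein correction for the discretisation rate $\D^{2q}(h(\D))^{4q}$. The paper also works with the stopping time $\theta=\inf\{t:|x(t)|\ve|Y(t)|\ge R\}$ throughout and applies Fatou's lemma at the end; you allude to stopping times only for the moment bounds.

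There is, however, a genuine gap in how you organise the discretisation error. You propose to Taylor-expand $\s(x(s))-\s(\bar Y(s))$ in the variable $x(s)-\bar Y(s)$ and assert that this increment has leading stochastic part $\sum_{j_2}\s_{j_2}(\bar Y)\,\D B^{j_2}$. But $x(s)-\bar Y(s)$ contains the global error $e(s)=x(s)-Y(s)$ itself, whose structure is precisely what you are trying to establish; the second-order Taylor remainder would then produce a term of order $|e(s)|^2$, which Gronwall cannot absorb. Likewise, ``correcting $\bar Y$ to $Y$'' in the drift via the bare continuity estimate $\E|Y-\bar Y|^{2q}\le C\D^q(h(\D))^{2q}$ together with Assumption~\ref{fgpoly} yields only the Euler rate $\D^q$, not $\D^{2q}$. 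The paper avoids both problems by splitting at the \emph{continuous} process $Y$ rather than at $\bar Y$: it writes $\s_i(x)-\tilde\s_i(\bar Y)-\sum_j L^j\tilde\s_i(\bar Y)\D B^j$ as $[\s_i(x)-\s_i(Y)]+[\s_i(Y)-\tilde\s_i(Y)]+\tilde R_1(\tilde\s_i)$, and the Taylor expansion \eqref{taylorformula2} is applied only to $\tilde\s_i(Y)-\tilde\s_i(\bar Y)$ (and analogously to $\tilde\mu(Y)-\tilde\mu(\bar Y)$), where the increment $Y(s)-\bar Y(s)$ genuinely has leading stochastic part $\int_{t_k}^s\tilde\s(\bar Y)\,dB$ by \eqref{continuousTM}. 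The remainder $\tilde R_1$ is controlled by Lemma~\ref{Rlem}, while the residual martingale-type term in the drift (the paper's $I_1$) is shown to be $O(\D^{2q})$ via the argument of \cite{WG2013}. If you reorganise your decomposition to split at $Y$ rather than $\bar Y$, the rest of your plan is correct.
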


\par
The following example is used to demonstrate the improvement of this new theorem over the convergence result in \cite{GLMY2018}.
\par
\begin{expl}
Consider a scalar SDE
\begin{equation}\label{example1}
dx(t) = (x^3(t) -4 x^5(t))dt + x^2(t)dB(t),~~~t\geq0,
\end{equation}
with the initial value $x(0)=1$.
\end{expl}

It is clear that both of the drift and diffusion coefficients have continuous second-order derivatives. In addition,
it is not hard to verify Assumptions \ref{fgpoly} and \ref{dfgspoly} hold with $r=4$.
For any $x,y\in \RR$ and  $p \geq 1$, we have
\begin{eqnarray*}
& &(x - y) (\mu(x) - \mu(y)) + (2p-1) |\s(x) - \s(y)|^2 \\
&=&(x-y)(x^3-4x^5-y^3+4y^5)+(2p-1)|x^2-y^2|^2\\
&=& (x-y)^2\big[(x^2+xy+y^2)-4(x^4+x^3y+x^2y^2+xy^3+y^4)\big]+(2p-1)(x+y)^2(x-y)^2.
\end{eqnarray*}
Since
\begin{equation*}
-(x^3y+xy^3)=-xy(x^2+y^2)\leq 0.5 (x^2+y^2)^{2}=0.5(x^4+y^4)+x^2y^2,
\end{equation*}
we obtain
\begin{eqnarray*}
& &(x - y) (\mu(x) - \mu(y)) + (2p-1) |\s(x) -\s(y)|^2 \\
&\leq & (x-y)^2[\frac{3}{2}(x^2+y^2)-2(x^4+y^4)+(2p-1)(2x^2+2y^2)]\\
&\leq & (x-y)^2[-2(x^4+y^4)+(4p-\frac{1}{2})(x^2+y^2)]\\
&\leq & 4(p-\frac{1}{8})^2(x-y)^2.
\end{eqnarray*}
That is to say, Assumption \ref{KhasminskiiCond} is also fulfilled. Now, we design the functions $\omega$ and $h$. Noting that
\begin{equation*}
\sup_{|x|\leq u}(|\mu (x)|\ve |\s(x)|\ve|\s'(x)|)\leq 4u^5, \quad \forall u\geq 1,
\end{equation*}
we choose $\omega(u)=4u^5$. Then its inverse function is $\omega^{-1}(u)=(u/4)^{1/5}$. Fix $\varepsilon \in (0,1/4]$, we define $h(\D)=4 \D^{-\varepsilon}$ for $\D \in (0,1)$.
Now, for any $q\geq 1$, we can choose $p$ sufficiently large for
$$
\frac{5}{\varepsilon(2p-2qr-2q)}>2q-4q\varepsilon.
$$
We can therefore conclude by Theorem \ref{thmmain} that the truncated Milstein solution of the SDEs (\ref{example1}) satisfy
$$
\E|x(T)-Y(T)|^{2q} \leq K \D^{2q(1-2\varepsilon)}, \quad \D\in (0, 1].
$$
That is, the strong $L^{2q}$-convergence rate is close to $2q$.

In order to highlight the significant contribution of our new result, let us make a comparison between our new Theorem \ref{thmmain} and one of the main results in \cite{GLMY2018}, namely Theorem \ref{T3.1}.
The key advantage of our new Theorem \ref{thmmain} lies in that it does not need condition (\ref{h}). Let us now explain, via the following example.
\begin{expl}
Consider the scalar SDE
\begin{equation}\label{3.6}
dx(t)=-83x^3(t) dt + x^2(t) dB(t),
\end{equation}
\end{expl}
where $B(t)$ is a scalar Brownian motion. Its coefficients $\mu(x)=-83x^3$ and $\s(x)=x^2$ are clearly locally Lipschitz continuous for $x\in \RR^d$. For $p=42$, we have
$$
x\mu(x)+(2p-1)|\s(x)|^2=0,
$$
so condition (\ref{KhasminskiiCondext}) is satisfied with $p=42$. Moreover, for $\overline{p}=2$, we have
\begin{eqnarray*}
&&(x-y)(\mu(x)-\mu(y))+(2\overline{p}-1)|\s(x)-\s(y)|^2\\
&=&-83(x^2+xy+y^2)|x-y|^2 +3(x^2+2xy+y^2)|x-y|^2\\
&=&-(80x^2+77xy+80y^2)|x-y|^2\\
&\leq& 0.
\end{eqnarray*}
Thus Assumption \ref{KhasminskiiCond} holds with $\overline{p}=2$. Furthermore, it is easy to show that
$$
|\mu(x)-\mu(y)|^2 \ve |\s(x)-\s(y)|^2\ve \left| \s'(x)\s(x) -  \s'(y)\s(y) \right| \leq C(1+x^4+y^4) |x-y|^2.
$$
That is, Assumption \ref{fgpoly} is satisfied with $r=4$.

We first apply Theorem \ref{T3.1} to see what we can get. Obviously, we have $p>\overline{p}$ and we choose $q=1$. Noting
$$
|\mu(x)|\ve |\s(x)| \leq 83|x|^3 \quad \forall |x|\geq 1,
$$
we can then choose $\w(u)=83u^3$ and $h(\D)=\D^{-1/10}$ to define the truncated Milstein solution $Y_N$ to the SDE (\ref{3.6}). It is easy to see that condition
(\ref{h}) becomes
$$
\D^{-1/10} \geq 83 \D^{-12/205}, \quad \text{i.e.}\quad  \D\leq 83^{-410/17}\approx 5.20417 \times 10^{-47}.
$$
For such a small step size, Theorem \ref{T3.1} shows
\begin{equation}\label{3.7}
\E|x(T)-Y_N|^2 \leq C\D^{8/5}.
\end{equation}
The key issue here is that step size is required to be very small, namely less than $5.20417 \times 10^{-47}$, due to condition (\ref{h}).

Let us now apply our new Theorem \ref{thmmain} to see if we can get a better result. We set let $q=1,\w(u)=83u^3$ and $h(\D)=\D^{-1/10}$ as before. Clearly, $p>(1+r)q$.
Noting that $\w^{-1}(u)=(u/83)^{1/3}$ and
$$
(\w^{-1}(h(\D)))^{-(2p-2qr-2q)}+\D^{2q}(h(\D))^{4q}= 83^{74/3} \D^{74/30} \ve \D^{8/5} =O(\D^{8/5})
$$
we conclude by Theorem \ref{thmmain} that for any $\D\in(0,1]$
$$
\E|x(T)-Y_N|^2 \leq C\D^{8/5}
$$
That is the same as (\ref{3.7}) but the step size $\D$ can now be any number in $(0,1]$ rather than $\D\leq 5.20417 \times 10^{-47}$.


In the computer, we choose $\varepsilon=0.1$ and regard the numerical solution with step size of 0.01 as the true solution. In Fig 1, we plot the strong errors of the truncated Milstein method with step size
$2\times 0.01,2^2\times 0.01,2^3\times 0.01,2^4\times 0.01,2^5\times 0.01$, and $2^6\times 0.01$, respectively. Clearly, we can see the strong convergence rate is close to one.
\begin{figure}
	\centering
    \includegraphics[height=6cm, width=8cm]{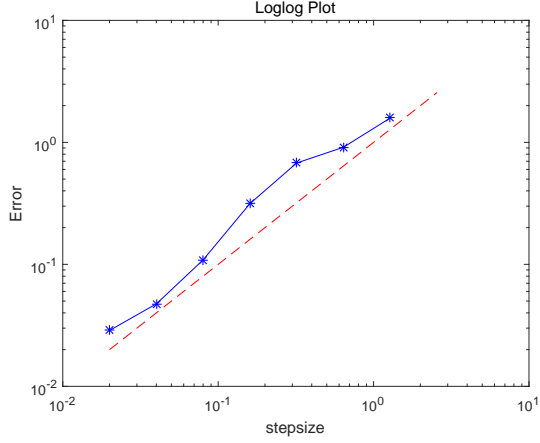}
	\caption{Loglog plot of the errors against the step sizes. The red dash line is of slope 1 and blue line indicates errors}
\end{figure}
\par
To prove the main result, we need to prepare some necessary lemmas. Due to the proofs of these lemmas are either quite standard or closely following those in \cite{GLMY2018}, we put them in Appendix \ref{appendixsec}.
\par \noindent
We are ready to give the proof of the main result.
\par \noindent
{\it Proof of Theorem \ref{thmmain}}
\par
To obtain assertion (\ref{strongerror}), we first to proof the following assertion
\begin{equation}\label{strongerror1}
\E \left( \left| x(t\wedge \theta)-Y(t\wedge \theta) \right|^{2q}  \right)\leq C \big(\Delta^{2q}(h(\Delta))^{4q} \ve(\omega^{-1}(h(\D)))^{-{(2p-2qr-2q)}} \big)
\end{equation}
hold, where $\theta:=\inf\{t\geq 0: |x(t)|\ve |Y(t)|\geq R \}$ is stopping time.
 By the It\^{o} formula, we can show that for any $0\leq t\leq T$,
\begin{equation}\label{eq:ito1}
\begin{split}
&\E \left( \left| x(t\wedge \theta)-Y(t\wedge \theta) \right|^{2q}  \right)\\
&= 2q \E \int_0^{t\wedge \theta}  \left| x(s)-Y(s) \right|^{2q-2} \Big\langle x(s) - {Y}(s),  \mu(x(s))-\tilde{\mu}(\bar{Y}(s)) \Big\rangle ds\\
&\quad+ 2q\sum_{i=1}^m \E \int_0^{t\wedge \theta} \left| x(s)-Y(s) \right|^{2q-2} \times\\
&\quad \quad\quad \frac{2q-1}{2}\Big| \s_i(x(s)) - \tilde{\s}_i(\bar{Y}(s))-\sum_{j=1}^m L^j  \tilde{\s_i}(\bar{Y}(s)) \Delta B^j(s) \Big|^2 ds.
\end{split}~~~~
\end{equation}
By plus-and-minus technique, we have
$$
 \mu(x(s))-\tilde{\mu}(\bar{Y}(s)) =\mu(x(s))-\mu(Y(s))+\mu(Y(s))-\tilde{\mu}(Y(s))+\tilde{\mu}(Y(s))-\tilde{\mu}(\bar{Y}(s)),
$$
and
\begin{equation*}
\begin{split}
&\frac{2q-1}{2} \Big|\s_i(x(s)) - \tilde{\s}_i(\bar{Y}(s))-\sum_{j=1}^m L^j  \tilde{\s_i}(\bar{Y}(s)) \Delta B^j(s)\Big|^2\\
=&\frac{2q-1}{2} \Big|\s_i(x(s)) -\s_i(Y(s)) +\s_i(Y(s))-\tilde{\s}_i({Y}(s))\\
&\quad +\tilde{\s}_i({Y}(s))-\tilde{\s}_i(\bar{Y}(s))-\sum_{j=1}^m L^j  \tilde{\s_i}(\bar{Y}(s)) \Delta B^j(s)\Big|^2\\
\leq& \frac{2p-1}{2} \Big|\s_i(x(s)) -\s_i(Y(s))\Big|^2+ \frac{(2p-1)(2q-1)}{2p-2q} \Big|\s_i(Y(s))-\tilde{\s}_i({Y}(s))\Big|^2\\
&\quad +\frac{(2p-1)(2q-1)}{2p-2q} \Big|\tilde{\s}_i({Y}(s))-\tilde{\s}_i(\bar{Y}(s))-\sum_{j=1}^m L^j  \tilde{\s_i}(\bar{Y}(s)) \Delta B^j(s)\Big|^2\\
\leq& ({2p-1}) \Big|\s_i(x(s)) -\s_i(Y(s))\Big|^2+ \frac{(2p-1)(2q-1)}{2p-2q} \Big|\s_i(Y(s))-\tilde{\s}_i({Y}(s))\Big|^2\\
&\quad +\frac{(2p-1)(2q-1)}{2p-2q} \Big|\tilde{\s}_i({Y}(s))-\tilde{\s}_i(\bar{Y}(s))-\sum_{j=1}^m L^j  \tilde{\s_i}(\bar{Y}(s)) \Delta B^j(s)\Big|^2.
\end{split}
\end{equation*}
Therefore, it follows from \eqref{eq:sigmaTaylor} and (\ref{eq:ito1}) that
\begin{equation}\label{eq:ito2}
\begin{split}
&	\E \left( \left| x(t\wedge \theta)-Y(t\wedge \theta) \right|^{2q}  \right)\\
= & 2q \E \int_0^{t\wedge \theta}  \left| x(s)-Y(s) \right|^{2q-2} \big\langle x(s) - {Y}(s),  \mu(x(s))-{\mu}({Y}(s)) \big\rangle  ds\\
&\quad+ 2q\sum_{i=1}^m \E \int_0^{t\wedge \theta} \left| x(s)-Y(s) \right|^{2q-2} (2q-1)\Big| \s_i(x(s)) - {\s}_i({Y}(s)) \Big|^2 ds\\
& +2q \E \int_0^{t\wedge \theta}  \left| x(s)-Y(s) \right|^{2q-2} \big\langle x(s) - {Y}(s),  \mu(Y(s))-\tilde{\mu}({Y}(s)) \big\rangle  ds\\
& +2q \E \int_0^{t\wedge \theta}  \left| x(s)-Y(s) \right|^{2q-2} \big\langle x(s) - {Y}(s),  \tilde{\mu}({Y}(s))-\tilde{\mu}(\bar{Y}(s)) \big\rangle  ds\\
&+ 2q\sum_{i=1}^m \E \int_0^{t\wedge \theta} \frac{(2p-1)(2q-1)}{2p-2q}\left| x(s)-Y(s) \right|^{2q-2}\Big| \s_i(Y(s)) - \tilde{\s}_i({Y}(s)) \Big|^2 ds\\
&+ 2q\sum_{i=1}^m \E \int_0^{t\wedge \theta} \frac{(2p-1)(2q-1)}{2p-2q}\left| x(s)-Y(s) \right|^{2q-2}  \Big|\tilde{R}_1(\tilde{\s}_i) \Big|^2 ds\\
\leq & 2q \big(J_1+J_2+J_3+J_4+J_5\big),
\end{split}
\end{equation}
where
\begin{eqnarray}\label{IntJ1}
J_1&=& \E \int_0^{t\wedge \theta}  \left| x(s)-Y(s) \right|^{2q-2}\Big( \big\langle x(s) - {Y}(s),  \mu(x(s))-{\mu}({Y}(s)) \big\rangle  ds\\
\nonumber
&\quad&+(2q-1)\sum_{i=1}^m\Big| \s_i(x(s)) - {\s}_i({Y}(s)) \Big|^2 \Big) ds,\\
\label{IntJ2}
 J_2&=&\E \int_0^{t\wedge \theta}  \left| x(s)-Y(s) \right|^{2q-2} \big\langle x(s) - {Y}(s), \mu(Y(s))-\tilde{\mu}({Y}(s)) \big\rangle  ds,\\
\label{IntJ3}
 J_3&=&\E \int_0^{t\wedge \theta}  \left| x(s)-Y(s) \right|^{2q-2} \big\langle x(s) - {Y}(s),  \tilde{\mu}({Y}(s))-\tilde{\mu}(\bar{Y}(s)) \big\rangle  ds,\\
\label{IntJ4}
 J_4&=& \frac{(2p-1)(2q-1)}{2p-2q}\sum_{i=1}^m \E \int_0^{t\wedge \theta} \left| x(s)-Y(s) \right|^{2q-2}\Big| \s_i(Y(s)) - \tilde{\s}_i({Y}(s)) \Big|^2 ds,
\end{eqnarray}
and
\begin{equation}\label{IntJ5}
J_5= \frac{(2p-1)(2q-1)}{2p-2q}\sum_{i=1}^m \E \int_0^{t\wedge \theta} \left| x(s)-Y(s) \right|^{2q-2}\Big|\tilde{R}_1(\tilde{\s}_i) \Big|^2 ds.~~~~~~~~~~~~~~~~~
\end{equation}
Applying Assumption \ref{KhasminskiiCond} to $J_1$, we obtain
\begin{equation}\label{Est_J1}
J_1\leq K_2 \E \int_0^{t\wedge \theta}  \left| x(s)-Y(s) \right|^{2q}  ds.
\end{equation}
Inserting the expression (\ref{taylorformula2}) into (\ref{IntJ2}) gives
\begin{equation}
\begin{split}
J_2&=\E \int_0^{t\wedge \theta}  \left| x(s)-Y(s) \right|^{2q-2} \big\langle x(s) - {Y}(s),  \mu(Y(s))-\tilde{\mu}({Y}(s)) \big\rangle  ds\\
&=\E \int_0^{t\wedge \theta}  \left| x(s)-Y(s) \right|^{2q-2} \big\langle x(s) - {Y}(s),  \mu(Y(s))-{\mu}(\pi_{\D}({Y}(s))) \big\rangle  ds\\
&\leq \E \int_0^{t\wedge \theta}  \left| x(s)-Y(s) \right|^{2q-2} \times\\
&\quad\quad\quad \big\langle x(s) - {Y}(s),  \mu'(\pi_{\D}({Y}(s)))\big(\sum_{j=1}^m \int_{t_k}^{s} \tilde{\s}_j(\pi_{\D}({Y}(s_1))) dB^j(s_1)\big) + \tilde{R}_1(\mu) \big\rangle  ds.
\end{split}
\end{equation}
By the Young inequality and Holder inequality, we get
\begin{equation}
J_2\leq  C \E \int_0^{t\wedge \theta} \big(\left| x(s)-Y(s) \right|^{2q}+|\tilde{R}_1(\mu)|^{2q}\big) ds  +  C I_1,
\end{equation}
where
$$
I_1= \E \int_0^{t\wedge \theta} \big| \big\langle x(s) - {Y}(s),  \mu'(\pi_{\D}({Y}(s)))\big(\sum_{j=1}^m \int_{t_k}^{s} {\s}_j(\pi_{\D}({Y}(s_1))) dB^j(s_1)\big)  \big\rangle \big|^{q}  ds.
$$
Following a very similar approach used for (3.23) in \cite{WG2013}, we can show that
\begin{equation*}
I_1\leq C \Delta^{2q}.
\end{equation*}
Therefore, we have
\begin{equation}\label{Est_J2}
\begin{split}
J_2 \leq C \E \int_0^{t\wedge \theta} \big( \left| x(s) - {Y}(s) \right|^{2q}  +  \big| \tilde{R}_1(\mu)  \big|^{2q}\big)  ds +C \Delta^{2q}.
\end{split}
\end{equation}
Similarly to $J_2$, using the Young inequality we obtain that
\begin{equation}\label{Est_J3}
\begin{split}
J_3&=\E \int_0^{t\wedge \theta}  \left| x(s)-Y(s) \right|^{2q-2} \big\langle x(s) - {Y}(s),  \tilde{\mu}({Y}(s))-\tilde{\mu}(\bar{Y}(s)) \big\rangle  ds\\
&\leq C \E \int_0^{t\wedge \theta} \big( \left| x(s) - {Y}(s) \right|^{2q}  +  \big| \tilde{R}_1(\mu)  \big|^{2q}\big)  ds +C \Delta^{2q}.
\end{split}
\end{equation}
Applying the Young inequality to (\ref{IntJ4}) gives
\begin{equation}
\begin{split}
J_4&=\frac{(2p-1)(2q-1)}{2p-2q}\sum_{i=1}^m \E \int_0^{t\wedge \theta} \left| x(s)-Y(s) \right|^{2q-2}\Big| \s_i(Y(s)) - \tilde{\s}_i({Y}(s)) \Big|^2 ds\\
 &\leq C \E\int_0^{t\wedge \theta} \left|x(s) - {Y}(s) \right|^{2q} ds +C\sum_{i=1}^m \E\int_0^{t\wedge \theta} \left| \s_i(Y(s)) - \tilde{\s}_i({Y}(s))  \right|^{2q} ds\\
 &\leq C \E\int_0^{t\wedge \theta} \left|x(s) - {Y}(s) \right|^{2q} ds +CI_2,
\end{split}
\end{equation}
where
$$
I_2=\sum_{i=1}^m \E\int_0^{t\wedge \theta} \left| \s_i(Y(s)) - \tilde{\s}_i({Y}(s))  \right|^{2q} ds.
$$
By the Assumption \ref{fgpoly} and Lemma \ref{TMbound}, we derive that
\begin{equation}
\begin{split}
I_2&=\sum_{i=1}^m \E\int_0^{t\wedge \theta} \left| \s_i(Y(s)) -{\s}_i(\pi_{\D}({Y}(s)))  \right|^{2q} ds\\
&\leq \sum_{i=1}^m \E\int_0^{t\wedge \theta}(1+|Y(s)|^{2qr}+|\pi_{\D}(Y(s))|^{2qr}) \left| Y(s) -\pi_{\D}({Y}(s))  \right|^{2q} ds\\
&\leq  \sum_{i=1}^m\int_0^{T}\E(1+|Y(s)|^{2p}+|\pi_{\D}(Y(s))|^{2p})^{\frac{2qr}{2p}}\Big(\E \left| Y(s) -\pi_{\D}({Y}(s))  \right|^{\frac{2p\cdot2q}{2p-2qr}}\Big)^{\frac{2p-2qr}{2p}} ds\\
&\leq C\sum_{i=1}^m\int_0^{T}\Big(\E \left| Y(s) -\pi_{\D}({Y}(s))  \right|^{\frac{2p\cdot2q}{2p-2qr}}\Big)^{\frac{2p-2qr}{2p}} ds\\
&\leq C\sum_{i=1}^m\int_0^{T}\Big( \E[I_{\{|Y(s)|>\omega^{-1}(h(\D))\}}|Y(s)|^{\frac{2p\cdot2q}{2p-2qr}}]\Big)^{\frac{2p-2qr}{2p}}  ds\\
&\leq C\sum_{i=1}^m\int_0^{T} \Big([\mathbb{P}\{|Y(s)>\omega^{-1}(h(\D))|\}]^{\frac{2p-2qr-2q}{2p-2qr}}[{\E|Y(s)|^{2p}}]^{\frac{2q}{2p-2qr}}\Big)^{\frac{2p-2qr}{2p}}ds\\
&\leq C\sum_{i=1}^m\int_0^{T} \Big(\frac{\E|Y(s)|^{2p}}{(\omega^{-1}(h(\D)))^{2p}}\Big)^{\frac{2p-2qr-2q}{2p}} ds\\
& \leq C(\omega^{-1}(h(\D)))^{-{(2p-2qr-2q)}}.
\end{split}
\end{equation}
Therefore, we can obtain
\begin{equation}\label{Est J4}
J_4\leq C \E\int_0^{t\wedge \theta} \left|x(s) - {Y}(s) \right|^{2q} ds+C(\omega^{-1}(h(\D)))^{-{(2p-2qr-2q)}}.
\end{equation}
We also using the Young inequality to $J_5$ get
\begin{equation}\label{Est J5}
\begin{split}
J_5&=\frac{(2p-1)(2q-1)}{2p-2q}\sum_{i=1}^m \E \int_0^{t\wedge \theta} \left| x(s)-Y(s) \right|^{2q-2}\Big|\tilde{R}_1(\tilde{\s}_i) \Big|^2 ds\\
&\leq C\sum_{i=1}^m \E \int_0^{t\wedge \theta} \left|x(s) - {Y}(s) \right|^{2q} ds+ C\sum_{i=1}^m \E \int_0^{t\wedge \theta} \Big|\tilde{R}_1(\tilde{\s}_i) \Big|^{2q} ds\\
&\leq C\sum_{i=1}^m \E \int_0^{t\wedge \theta} \left|x(s) - {Y}(s) \right|^{2q} ds+ C  \Delta^{2q}(h(\Delta))^{4q}.
\end{split}
\end{equation}
Substituting (\ref{Est_J1}), (\ref{Est_J2}), (\ref{Est_J3}), (\ref{Est J4}) and (\ref{Est J5}) into (\ref{eq:ito2}), and then applying the Gronwall inequality and Lemma \ref{Rlem},
\begin{equation}
\begin{split}
\E \left( \left| x(t\wedge \theta)-Y(t\wedge \theta) \right|^{2q}  \right)\leq C \big(\Delta^{2q}(h(\Delta))^{4q} \ve(\omega^{-1}(h(\D)))^{-{(2p-2qr-2q)}}\big),
\end{split}
\end{equation}
which is assertion (\ref{strongerror1}). Finally,
using the well-known Fatou lemma, we can let $R\rightarrow \8$ to obtain the desired assertion (\ref{strongerror}). \eproof

\section{Almost sure stability} \label{secas}
In the section we discuss the preservation of the almost sure asymptotic stability of the underlying SDEs (\ref{sde}) by using the truncated Milstein method. To study the stability, we also assume that
$$
\mu(0)=0,\quad \s(0)=0.
$$

To guarantee the almost sure asymptotic stability of the underlying SDEs (\ref{sde}), we need an additional assumption.

\begin{assp}\label{A4.1}
Assume that there exists a function $k\in\mathcal{K}$ such that
\begin{equation}
2x^T \mu(x)+|\s(x)|^2 \leq -k(|x|)
\end{equation}
for all $x\in \RR^d$, where $\mathcal{K}$ denotes the family of continuous nondecreasing functions $k : \RR_+\rightarrow \RR_+ $ such that $k(0)=0$ and $k(u)>0$ for all $u>0$..
\end{assp}

The following theorem from \cite{Mao2002a} states the almost sure asymptotic stability of the underlying SDEs.
\begin{theorem}\label{T4.1}
Let Assumption \ref{A4.1} hold. Then for any initial value $x_0\in \RR^d$, the solution to the SDE (\ref{sde}) satisfies
\begin{equation}
\lim_{t\rightarrow \8} x(t)=0 \quad a.s.
\end{equation}
\end{theorem}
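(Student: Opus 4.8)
The plan is to follow the classical Lyapunov route for LaSalle-type almost sure stability: take the Lyapunov function $V(x)=|x|^{2}$, apply the It\^o formula along the solution, and then invoke the nonnegative semimartingale convergence theorem to extract both the almost sure convergence of $|x(t)|^{2}$ and the integrability of the dissipation term supplied by Assumption \ref{A4.1}. The sign condition in Assumption \ref{A4.1} will do double duty: it guarantees a global (nonexploding) solution and it produces the decreasing process that drives the argument.

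First I would note that the standing local Lipschitz regularity gives local existence and uniqueness, while Assumption \ref{A4.1} yields $2x^{T}\mu(x)+|\s(x)|^{2}\le 0$ and hence nonexplosion, so the solution is defined for all $t\ge 0$. Applying the It\^o formula to $|x(t)|^{2}$ gives
\begin{equation*}
|x(t)|^{2}=|x_0|^{2}+\int_0^{t}\big(2x(s)^{T}\mu(x(s))+|\s(x(s))|^{2}\big)\,ds+\int_0^{t}2x(s)^{T}\s(x(s))\,dB(s).
\end{equation*}
Writing $M(t)=\int_0^{t}2x(s)^{T}\s(x(s))\,dB(s)$ for the continuous local martingale part and using Assumption \ref{A4.1}, this becomes
\begin{equation*}
|x(t)|^{2}\le |x_0|^{2}-\int_0^{t}k(|x(s)|)\,ds+M(t).
\end{equation*}

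I would then read the right-hand side as a nonnegative semimartingale decomposition with zero increasing part $A(t)\equiv 0$, decreasing part $U(t)=\int_0^{t}k(|x(s)|)\,ds$, and local martingale part $M(t)$. Applying the nonnegative semimartingale convergence theorem (as in Mao's monograph), and noting that $A(\infty)=0<\infty$ surely, I obtain almost surely that $L:=\lim_{t\to\infty}|x(t)|^{2}$ exists and is finite and, simultaneously, that $\int_0^{\infty}k(|x(s)|)\,ds<\infty$.

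It remains to show $L=0$ almost surely, and here I would argue by contradiction on the event $\{L>0\}$. On that event $|x(t)|\to\sqrt{L}>0$, so there is a random time $T_0$ with $|x(t)|\ge \sqrt{L}/2$ for all $t\ge T_0$; since $k$ is nondecreasing and strictly positive away from the origin, $k(|x(t)|)\ge k(\sqrt{L}/2)>0$ for $t\ge T_0$, forcing $\int_0^{\infty}k(|x(s)|)\,ds=\infty$ on $\{L>0\}$. This contradicts the almost sure finiteness of the integral unless $\PP(L>0)=0$, giving $L=0$ and hence $\lim_{t\to\infty}x(t)=0$ a.s. The only genuinely nonroutine ingredient is the semimartingale convergence theorem, which is precisely what converts the pathwise dissipation inequality into the two almost sure conclusions; the final contradiction step is elementary once the defining properties of the class $\mathcal{K}$ are used, and the global existence check is immediate from the sign of the drift.
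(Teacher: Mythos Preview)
Your argument is correct and is exactly the classical LaSalle-type proof via the nonnegative semimartingale convergence theorem. Note, however, that the paper does not supply its own proof of this theorem: it simply quotes the result from \cite{Mao2002a}, so there is nothing in the paper to compare against beyond observing that your proof is the standard one underlying that reference.
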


The following theorem shows that the truncated Milstein method can preserve this almost surely asymptotical stability with an addition condition (\ref{4.3}).
\begin{theorem}
Let  Assumption \ref{A4.1} hold, and for any $\D\in(0, 1]$
\begin{equation}\label{-K}
2 x^T {\mu}(x) +| {\s}(x)|^2+\frac{1}{2}|\sum_{j_1= 1}^m \sum_{j_2 = 1}^m L^{j_1}  \s_{j_2} (x) |^2 \D \leq -k(|x|)
\end{equation}
hold.  Assume also that
\begin{equation}\label{4.3}
\limsup_{|x|\rightarrow 0}\frac{|\mu(x)|^2}{k(|x|)}<\8.
\end{equation}
Set
\begin{equation}\label{H}
H=\sup_{0<|x|<\omega^{-1}(h(1))}\frac{|\mu(x)|^2}{k(|x|)},
\end{equation}
and
\begin{equation}\label{4.4}
\D_1=\min\Big(1,    0.5/H,   0.25(k(\omega^{-1}(h(1))))^2 \Big).
\end{equation}
Then for any $\D\in(0,\D_1]$ and any initial value $x_0\in \RR^d$, the solution of the truncated Milstein method (\ref{theTMmethod}) satisfies
\begin{equation}\label{4.7}
\lim_{k\rightarrow \8}Y_k =0 \quad  a.s.
\end{equation}
\end{theorem}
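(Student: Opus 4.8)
The plan is to apply the nonnegative semimartingale convergence theorem to the nonnegative discrete process $V_k:=|Y_k|^2$. Everything rests on a one–step dissipativity estimate for $\E[|Y_{k+1}|^2\mid\F_{t_k}]$: once this estimate produces a strict one–step decrease of the form $\E[|Y_{k+1}|^2\mid\F_{t_k}]\le|Y_k|^2-\tfrac12 k(|Y_k|)\D$, the conclusion (\ref{4.7}) follows from that theorem together with the continuity and positivity of $k\in\mathcal{K}$. Conditions (\ref{-K}), (\ref{4.3}) and the step–size cap $\D\le\D_1$ in (\ref{4.4}) are exactly what is needed to turn the right–hand side of (\ref{-K}) into such a decrease.

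First I would expand $|Y_{k+1}|^2$ from (\ref{theTMmethod}), writing the increment as $\tilde\mu(Y_k)\D+\sum_{j=1}^m\tilde\sigma_j(Y_k)\D B_k^j+c_k$ with $c_k=\tfrac12\sum_{j_1,j_2}L^{j_1}\tilde\sigma_{j_2}(Y_k)(\D B_k^{j_1}\D B_k^{j_2}-\delta_{j_1j_2}\D)$, and take $\E[\,\cdot\mid\F_{t_k}]$. Using $\E[\D B_k^j\mid\F_{t_k}]=0$, $\E[\D B_k^{j_1}\D B_k^{j_2}\mid\F_{t_k}]=\delta_{j_1j_2}\D$ and the vanishing of odd Gaussian moments, all cross terms drop out and
\begin{equation*}
\E[|Y_{k+1}|^2\mid\F_{t_k}]=|Y_k|^2+\big(2Y_k^T\tilde\mu(Y_k)+|\tilde\sigma(Y_k)|^2\big)\D+|\tilde\mu(Y_k)|^2\D^2+\E[|c_k|^2\mid\F_{t_k}].
\end{equation*}
A fourth–moment (Isserlis) computation then bounds the Milstein correction by the term carried in (\ref{-K}), namely $\E[|c_k|^2\mid\F_{t_k}]\le\tfrac12\big|\sum_{j_1,j_2}L^{j_1}\tilde\sigma_{j_2}(Y_k)\big|^2\D^2$, so the drift matches the left–hand side of (\ref{-K}) up to the factor $\D$.

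On the event $\{|Y_k|\le\omega^{-1}(h(\D))\}$ truncation is inactive, $\pi_\D(Y_k)=Y_k$, and the truncated coefficients coincide with $\mu,\sigma,L^{j_1}\sigma_{j_2}$; applying (\ref{-K}) at $x=Y_k$ gives $\E[|Y_{k+1}|^2\mid\F_{t_k}]\le|Y_k|^2-k(|Y_k|)\D+|\mu(Y_k)|^2\D^2$. Since $h$ is decreasing and $\omega^{-1}$ increasing, $\omega^{-1}(h(\D))\ge\omega^{-1}(h(1))$ for $\D\le1$, so there $|\mu(Y_k)|^2\le H\,k(|Y_k|)$ by (\ref{H}), with $H<\infty$ guaranteed by (\ref{4.3}); the choice $\D\le\D_1\le 0.5/H$ absorbs the quadratic term to yield $\E[|Y_{k+1}|^2\mid\F_{t_k}]\le|Y_k|^2-k(|Y_k|)\D(1-H\D)\le|Y_k|^2-\tfrac12 k(|Y_k|)\D$. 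Writing $V_{k+1}=V_k-\tfrac12 k(|Y_k|)\D+\big(V_{k+1}-\E[V_{k+1}\mid\F_{t_k}]\big)$ exhibits $V_k$ as a nonnegative process whose increasing compensator is $U_n=\tfrac12\sum_{k<n}k(|Y_k|)\D$ and whose martingale part is genuine because the truncated coefficients are bounded by $h(\D)$. The convergence theorem then gives that $\lim_k V_k$ exists and is finite a.s. and that $\sum_k k(|Y_k|)\D<\infty$ a.s.; since $t_k=k\D\to\infty$ forces $\sum_k\D=\infty$, a finite nonzero limit of $|Y_k|$ would keep $k(|Y_k|)$ bounded below by a positive constant and make the sum diverge, whence $\lim_k Y_k=0$ a.s.

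The main obstacle is the truncation. The clean estimate above needs $\pi_\D(Y_k)=Y_k$, whereas a priori $|Y_k|$ may exceed $\omega^{-1}(h(\D))$; then $Y_k^T\tilde\mu(Y_k)$ pairs $Y_k$ in one slot with the projected point $\pi_\D(Y_k)$ in the other, and (\ref{-K}) no longer applies verbatim. The delicate part is therefore to show the iterates stay in, or are eventually absorbed into, the ball $\{|x|\le\omega^{-1}(h(1))\}$ on which the supermartingale computation is valid; this is precisely the role of the third entry $0.25(k(\omega^{-1}(h(1))))^2$ in (\ref{4.4}), which caps the size of a single increment near the truncation boundary. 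Making this containment rigorous, rather than the routine moment algebra, is where the real work lies.
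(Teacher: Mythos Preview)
Your skeleton (square the scheme, extract a negative predictable drift, apply the nonnegative semimartingale convergence theorem) matches the paper.  The gap is in how you treat the truncated region, and this is exactly where the paper's argument is different from what you propose.

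You decide to split on the event $\{|Y_k|\le \omega^{-1}(h(\Delta))\}$ and then try to force the iterates back into a ball.  The paper never does this.  Instead it uses that $\pi_\Delta$ is a \emph{radial} projection: for $|x|>\omega^{-1}(h(\Delta))$ one has $x=(|x|/|\pi_\Delta(x)|)\,\pi_\Delta(x)$, so
\[
2x^T\tilde\mu(x)=\frac{|x|}{|\pi_\Delta(x)|}\,2\pi_\Delta(x)^T\mu(\pi_\Delta(x)),
\]
and the factor in front is $\ge 1$.  Applying (\ref{-K}) at the point $\pi_\Delta(x)$ then yields, for \emph{all} $x\in\RR^d$,
\[
2x^T\tilde\mu(x)+|\tilde\sigma(x)|^2+\tfrac12\Big|\sum_{j_1,j_2}L^{j_1}\tilde\sigma_{j_2}(x)\Big|^2\Delta\le -k(|\pi_\Delta(x)|).
\]
So no containment argument is needed; the dissipativity inequality is inherited by the truncated coefficients globally, with $k(|\pi_\Delta(x)|)$ on the right.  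This is the step you are missing.

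The role of the three entries in $\Delta_1$ is also different from what you guess.  They are used to prove the \emph{global} bound $|\tilde\mu(x)|^2\Delta\le \tfrac12 k(|\pi_\Delta(x)|)$ (following \cite{HLM2018a}): when $|\pi_\Delta(x)|<\omega^{-1}(h(1))$ one uses the definition of $H$ together with $\Delta\le 0.5/H$; when $|\pi_\Delta(x)|\ge \omega^{-1}(h(1))$ one uses $|\tilde\mu(x)|\le h(\Delta)$, the growth condition $\Delta^{1/4}h(\Delta)\le \overline h$, the monotonicity $k(|\pi_\Delta(x)|)\ge k(\omega^{-1}(h(1)))$, and the third entry $\Delta\le 0.25\,(k(\omega^{-1}(h(1))))^2$.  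So the third entry is not a cap on one-step increments near the boundary; it handles the annulus $\omega^{-1}(h(1))\le |\pi_\Delta(x)|\le \omega^{-1}(h(\Delta))$ in the bound for $|\tilde\mu|^2\Delta$.  Relatedly, your claim that ``there $|\mu(Y_k)|^2\le H\,k(|Y_k|)$'' on $\{|Y_k|\le\omega^{-1}(h(\Delta))\}$ is not justified: $H$ is a supremum over the strictly smaller set $\{0<|x|<\omega^{-1}(h(1))\}$, and your own inequality $\omega^{-1}(h(\Delta))\ge\omega^{-1}(h(1))$ shows the event you work on is the larger one.

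With the two global inequalities in place the paper obtains, pathwise,
\[
|Y_{k+1}|^2\le |Y_k|^2-\tfrac12\,\Delta\,k(|\pi_\Delta(Y_k)|)+m_{k+1},
\]
where $m_{k+1}$ is a (local) martingale difference, and then the semimartingale convergence theorem gives $\sum_{i}k(|\pi_\Delta(Y_i)|)<\infty$ a.s.  Hence $k(|\pi_\Delta(Y_i)|)\to 0$, so $|\pi_\Delta(Y_i)|\to 0$; since $|\pi_\Delta(Y_i)|=|Y_i|\wedge\omega^{-1}(h(\Delta))$, this forces $|Y_i|<\omega^{-1}(h(\Delta))$ eventually and therefore $Y_i\to 0$.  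Note the drift involves $k(|\pi_\Delta(Y_k)|)$, not $k(|Y_k|)$, which is also why no containment argument is needed at the end.
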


\begin{proof}
We first observe that $H< \8$ from condition (\ref{H}), hence we have $\D_1 \in (0,1]$.
Next, We show that the truncated functions $\tilde \mu$ and $\tilde \s$ preserve property (\ref{-K}) perfectly in the sense that,
\begin{equation}
2 x^T \tilde{\mu}(x) +| \tilde{\s}(x)|^2+\frac{1}{2}|\sum_{j_1= 1}^m \sum_{j_2 = 1}^m L^{j_1} \tilde \s_{j_2} (x) |^2 \D \leq -k(|x|).
\end{equation}
According to the $\frac{1}{2}|\sum_{j_1= 1}^m \sum_{j_2 = 1}^m L^{j_1} \tilde \s_{j_2} (x) |^2 \D\ge 0$ and using the same technique in \cite{HLM2018a}, so we omit the proof.

Let now fix any $\D \in(0,1]$ and $x_0\in \RR^d$. Squaring both sides of the (\ref{theTMmethod}), we are easy to arrive at
\begin{equation}\label{square}
\begin{split}
|Y_{k+1}|^2
&=|Y_k|^2+2 Y_k^T \tilde{\mu}(Y_k) \D+| \tilde{\s}(Y_k)|^2\D+\frac{1}{2}|\sum_{j_1= 1}^m \sum_{j_2 = 1}^m L^{j_1} \tilde \s_{j_2} (Y_k) |^2 \D^2\\
&\quad+|\tilde{\mu}(Y_k)|^2\D^2 +m_{k+1},
\end{split}
\end{equation}
where
\begin{equation}
\begin{split}
m_{k+1}=& | \tilde{\s}(Y_k)|^2(\Delta B_k^{j_1} \Delta B_k^{j_2}-\Delta)
 +\frac{1}{4}|\sum_{j_1= 1}^m \sum_{j_2 = 1}^m L^{j_1} \tilde \s_{j_2} (Y_k) |^2[(\Delta B_k^{j_1} \Delta B_k^{j_2}-\Delta)^2-2\D^2]\\
&+2\big\langle Y_k, \tilde{\s}(Y_k)\D B_k +\frac{1}{2} \sum_{j_1= 1}^m \sum_{j_2 = 1}^m L^{j_1} \tilde \s_{j_2} (Y_k)\big( \Delta B_k^{j_1} \Delta B_k^{j_2}-\Delta\big)\big\rangle\\
&+ 2\big\langle \tilde{\mu}(Y_k)\D,  \tilde{\s}(Y_k)\D B_k +\frac{1}{2} \sum_{j_1= 1}^m \sum_{j_2 = 1}^m L^{j_1} \tilde \s_{j_2} (Y_k)\big( \Delta B_k^{j_1} \Delta B_k^{j_2}-\Delta\big)  \big\rangle\\
&+   \big\langle 2 \tilde{\s}(Y_k)\D B_k, \sum_{j_1= 1}^m \sum_{j_2 = 1}^m L^{j_1} \tilde \s_{j_2} (Y_k)\big( \Delta B_k^{j_1} \Delta B_k^{j_2}-\Delta\big) \big\rangle
\end{split}
\end{equation}
is a local martingale difference.

Following a same approach used for (5.14) in \cite{HLM2018a}, we can show
\begin{equation}
|\tilde{\mu}(x)|^{2}\D \leq 0.5 k(|\pi_{\D}(x)|),
\end{equation}
Substituting this into (\ref{square}) and according to the (\ref{-K}), we get
\begin{equation}
\begin{split}
|Y_{k+1}|^2
&=|Y_k|^2-0.5\D k(|\pi_\D(Y_k)|) +m_{k+1}.
\end{split}
\end{equation}
This implies
\begin{equation}
\begin{split}
|Y_{k+1}|^2
&=|Y_0|^2-0.5\D\sum_{i=0}^k k(|\pi_\D(Y_i)|) +\sum_{i=0}^k m_{i+1}.
\end{split}
\end{equation}
Applying the nonnegative semi-martingale convergence theorem, we get
$$
\sum_{i=0}^{\8} k(|\pi_\D(Y_i)|) < \8 \quad a.s.
$$
This implies
$$
\lim_{i\rightarrow \8}k(|\pi_\D(Y_i)|) =0 \quad a.s.
$$
\end{proof}
Consequently, we must have
$$
\lim_{i\rightarrow \8}|\pi_\D(Y_i)| =0 \quad a.s.
$$
and the desired assertion (\ref{4.7}) follows. The proof is therefore complete. \eproof
\begin{expl}
Consider a scalar SDE
\begin{equation}\label{4.14}
dx(t) = (-x(t)-6x^3(t) -4 x^5(t))dt + x^2(t)dB(t),~~~t\geq0,
\end{equation}
with the initial value $x(0)=1$.
\end{expl}
It can be seen that for any $\D \in (0,1]$
\begin{eqnarray*}
2x^T\mu(x)+|\s(x)|^2 +\frac{1}{2}|\s'(x)\s(x)|^2\D
&=&2x(-x-6x^3-4x^5)+|x^2|^2+\frac{1}{2}|2x\cdot x^2|^2\D\\
&=&-2x^2 -12x^4-8x^6+x^4+2x^6\D \\
&\leq &-2x^2 -11x^4-6x^6\leq -2x^2.
\end{eqnarray*}
Let us choose $\omega(u)=4u^5$ and $h(\D)=4\D^{-1/4}$. It is not hard to see that (\ref{H}) is satisfied with $k(u)=2u^2$. Moreover, we observe
$$
\limsup_{|x|\rightarrow 0} \frac{|\mu(x)|^2}{k(|x|)}=\limsup_{|x|\rightarrow 0} \frac{|-x-6x^3-4x^5|^2}{2|x|^2} < \8,
$$
which implies that (\ref{4.3}) holds. Noting that $\omega^{-1}(h(1))=1$,
we can compute by (\ref{H}) and (\ref{4.4}) that $H=25$ and $\D_1=0.04$.
According to Theorem \ref{T4.1}, now we can conclude that for every $\D\in (0,0.04]$ and any initial value $x_0\in \RR$ the truncated Milstein method satisfies
\begin{equation}
\lim_{k\rightarrow \8}Y_k =0 \quad  a.s.
\end{equation}

Fig 2 displays 10 paths of solutions generated by the truncated Milstein method. It can be seen that the almost sure stability of SDE (\ref{4.14}) is preserved.
\begin{figure}
	\centering
    \includegraphics[height=6cm, width=8cm]{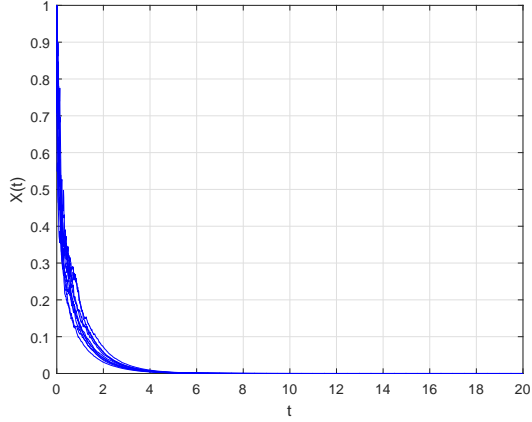}
	\caption{10 paths of the numerical solutions generated by the truncated Milstein method}
\end{figure}

\appendix
\section{Useful lemmas} \label{appendixsec}

The first one is the standard result on the moments bound of the underlying solution. The proof could be found in, for example \cite{M2008a}.
\begin{lemma}
Under Assumption \ref{KhasminskiiCond}, there exists a positive constant $K$, dependent on $t$ and $p$, such that
 \begin{equation*}\label{sdepthmoment}
\E |x(t)|^{2p} \leq K \left( 1 + |x(0)|^{2p} \right).
\end{equation*}
\end{lemma}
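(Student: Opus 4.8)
The plan is to apply the It\^o formula to the function $V(x)=|x|^{2p}$ along the solution of \eqref{sde} and to feed the resulting drift into the Khasminskii-type estimate \eqref{KhasminskiiCondext}. Since the coefficients may grow super-linearly, one cannot take expectations naively; instead I would first work with a stopped process. Define the stopping times $\tau_n=\inf\{t\ge 0:|x(t)|\ge n\}$, derive a bound for $\E|x(t\wedge\tau_n)|^{2p}$ that is uniform in $n$, and only at the end let $n\to\8$ through the Fatou lemma.

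First I would compute, via It\^o's formula, that the finite-variation part of $d|x(t)|^{2p}$ equals
\begin{equation*}
2p|x(t)|^{2p-2}\Big[\langle x(t),\m(x(t))\rangle+\tfrac12|\s(x(t))|^2+(p-1)\frac{|\s(x(t))^T x(t)|^2}{|x(t)|^2}\Big]\,dt,
\end{equation*}
while the local-martingale part is $\int 2p|x(t)|^{2p-2}\langle x(t),\s(x(t))\,dB(t)\rangle$. The elementary bound $|\s(x)^T x|^2\le|\s(x)|^2|x|^2$ lets me absorb the last bracket term into the diffusion term, so the bracket is at most $\langle x,\m(x)\rangle+(p-\tfrac12)|\s(x)|^2$, which, because $|\s(x)|^2\ge 0$ and $p\ge 1$, is dominated by $\langle x,\m(x)\rangle+(2p-1)|\s(x)|^2$. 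Invoking \eqref{KhasminskiiCondext}, this is at most $\l_2(1+|x|^2)$, and hence the drift is bounded by $2p\l_2(|x|^{2p-2}+|x|^{2p})\le 2p\l_2(1+2|x|^{2p})$ after using $|x|^{2p-2}\le 1+|x|^{2p}$.

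Taking expectations of the stopped process eliminates the martingale term (it is a genuine martingale up to $\tau_n$ since $|x(s\wedge\tau_n)|\le n$ makes the integrand bounded), yielding
\begin{equation*}
\E|x(t\wedge\tau_n)|^{2p}\le |x_0|^{2p}+2p\l_2 T+4p\l_2\int_0^t \E|x(s\wedge\tau_n)|^{2p}\,ds.
\end{equation*}
The Gronwall inequality then gives $\E|x(t\wedge\tau_n)|^{2p}\le(|x_0|^{2p}+2p\l_2 T)e^{4p\l_2 t}$, a bound independent of $n$; letting $n\to\8$ and applying Fatou's lemma produces $\E|x(t)|^{2p}\le K(1+|x(0)|^{2p})$ with $K=K(t,p)$, as claimed.

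I expect the only genuine obstacle to be the integrability and localization bookkeeping: one must confirm the solution is well defined and non-explosive (which follows from the local Lipschitz property in Assumption \ref{fgpoly} together with \eqref{KhasminskiiCondext}), and must verify that every constant entering the Gronwall step is independent of $n$ so that Fatou applies cleanly. The algebraic reduction of the It\^o drift, by contrast, is entirely routine.
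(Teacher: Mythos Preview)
Your argument is correct and is precisely the standard localization--It\^o--Gronwall proof the paper has in mind; indeed, the paper does not give its own proof of this lemma but simply cites Mao's monograph \cite{M2008a}, where exactly this computation appears. There is nothing to add or compare.
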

\par \noindent
Since the main change in the condition in this paper is the first inequality in \eqref{deltahdelta}, Lemmas \ref{preservecon} to \ref{Rlem} could be proved by closely following those approaches in \cite{GLMY2018}. Therefore, we omit proofs of them here and only detail the proof of Theorem \ref{thmmain}, in which some different techniques are used to release the constrains on the step size.
\par
The following Lemma shows that the functions $\tilde \mu $ and $\tilde \s$ preserve (\ref{KhasminskiiCondext}) for all $\Delta \in (0, \Delta^*]$.
\begin{lemma} \label{preservecon}
Assume that (\ref{KhasminskiiCondext}) holds.  Then, for all $\overline{p}\geq 1$ and any $x \in \RR^d$,
	\begin{equation}\label{truncatedKhasminskiiCondext}
	\left\[ x, \tilde{\mu}(x) \right\] + (2\overline{p}-1) \left| \tilde{\s}(x) \right|^2 \leq 2\l_2(1 + |x|^2).
	\end{equation}
\end{lemma}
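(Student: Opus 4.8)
The plan is to exploit the radial scaling structure of the projection $\pi_\D$ and to reduce the whole statement to the already-assumed inequality \eqref{KhasminskiiCondext}. Write $R_\D:=\omega^{-1}(h(\D))$ for the truncation radius and set $y:=\pi_\D(x)$, so that $\tilde{\mu}(x)=\mu(y)$ and $\tilde{\s}(x)=\s(y)$. The argument splits according to whether $x$ is truncated or not. In the easy case $|x|\le R_\D$ the map $\pi_\D$ is the identity, so $y=x$, and \eqref{truncatedKhasminskiiCondext} is nothing but \eqref{KhasminskiiCondext} itself, already giving the bound with the smaller constant $\l_2\le 2\l_2$.

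The substantive case is $|x|>R_\D$, where $y=(R_\D/|x|)x$ and hence $x=(|x|/R_\D)\,y$ with amplification factor $|x|/R_\D>1$. Since $x$ and $y$ are positively proportional, $\langle x,\mu(y)\rangle=(|x|/R_\D)\langle y,\mu(y)\rangle$, and a one-line rearrangement yields the identity
\begin{equation*}
\langle x,\tilde{\mu}(x)\rangle+(2\overline{p}-1)|\tilde{\s}(x)|^2
=\frac{|x|}{R_\D}\Big(\langle y,\mu(y)\rangle+(2\overline{p}-1)|\s(y)|^2\Big)+\Big(1-\frac{|x|}{R_\D}\Big)(2\overline{p}-1)|\s(y)|^2 .
\end{equation*}
Because $|x|/R_\D>1$, the last term is nonpositive and may be discarded, leaving
\begin{equation*}
\langle x,\tilde{\mu}(x)\rangle+(2\overline{p}-1)|\tilde{\s}(x)|^2\le \frac{|x|}{R_\D}\,A,\qquad A:=\langle y,\mu(y)\rangle+(2\overline{p}-1)|\s(y)|^2 .
\end{equation*}

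Now I would apply \eqref{KhasminskiiCondext} at the point $y$, which satisfies $|y|=R_\D$, to obtain $A\le\l_2(1+R_\D^2)$. If $A\le 0$ the left-hand side is already nonpositive and the claim is trivial; if $A>0$, this gives
$$
\frac{|x|}{R_\D}\,A\le \l_2\Big(\frac{|x|}{R_\D}+R_\D|x|\Big).
$$
The one place that needs care, and what I expect to be the main obstacle, is controlling the amplification factor $|x|/R_\D$: the term $R_\D|x|\le|x|^2$ is harmless since $R_\D<|x|$ in this case, but $|x|/R_\D$ must be kept below a constant multiple of $1+|x|^2$. This is exactly where the lower bound $R_\D=\omega^{-1}(h(\D))\ge 1$ enters: since $h(\D)\to\infty$ as $\D\to 0$, one has $h(\D)\ge\omega(1)$, hence $R_\D\ge 1$, for all sufficiently small $\D$, and then $|x|/R_\D\le|x|\le\tfrac12(1+|x|^2)$. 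Combining the two estimates gives $\tfrac{|x|}{R_\D}A\le\tfrac32\l_2(1+|x|^2)\le 2\l_2(1+|x|^2)$, which closes the second case and completes the proof. I would note explicitly that the factor $2$ in \eqref{truncatedKhasminskiiCondext} is precisely the slack absorbed by this amplification argument, and that restricting to small $\D$ is harmless because the lemma is only invoked in that regime in the convergence analysis.
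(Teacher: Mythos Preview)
Your argument is correct and is precisely the radial-scaling proof from \cite{GLMY2018} (originating in \cite{Mao2015}) that the paper invokes in lieu of a detailed proof. Your caveat about needing $R_\D=\omega^{-1}(h(\D))\ge 1$ is exactly the issue the paper's own Remark addresses: since $h$ is strictly decreasing one always has the uniform lower bound $R_\D\ge\omega^{-1}(h(1))>0$ for every $\D\in(0,1]$, which closes the estimate for all $\D$ at the price of replacing $2\l_2$ by the constant $\widehat K$ mentioned there.
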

 The next lemma presents the moment bound of the numerical solution.
\begin{lemma}\label{TMbound}
Let Assumptions \ref{fgpoly}, \ref{KhasminskiiCond} and \ref{dfgspoly} hold. Then for any $\Delta \in (0, \Delta^*]$ and any $T > 0$, $p\geq 1$
\begin{equation*}
\sup_{0 < \Delta \leq \Delta^*}\sup_{0 \leq t \leq T}\E |Y(t)|^{2p} \leq K \left( 1 + \E |Y(0)|^{2p} \right),
\end{equation*}
where K is a positive constant dependent on $T$ but independent of $\Delta$.
\end{lemma}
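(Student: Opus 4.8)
The plan is to bound $\E(1+|Y(t)|^2)^p$ uniformly in $\Delta$ by an It\^o--Lyapunov--Gronwall argument on the continuous interpolation $Y(t)$ from (\ref{continuousTM}); I work with $U(y)=(1+|y|^2)^p$ rather than $|y|^{2p}$ so that the first and second derivatives stay bounded near the origin, and recover the stated bound at the end from $|y|^{2p}\leq U(y)$. First I would record the differential form of (\ref{continuousTM}): on each $[t_k,t_{k+1})$ the process $Y$ solves an It\^o equation with drift $\tilde\mu(\bar Y(s))$ and $j_1$-th diffusion column $\tilde\Theta_{j_1}(s):=\tilde\s_{j_1}(\bar Y(s))+\sum_{j_2}L^{j_1}\tilde\s_{j_2}(\bar Y(s))\Delta B^{j_2}(s)$. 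Since $Y$ is continuous across the grid points, It\^o's formula applies on $[0,t]$; taking expectations annihilates the martingale part and leaves $\E U(Y(t))=\E U(Y(0))+\E\int_0^t\mathcal{L}U(s)\,ds$, where $\mathcal{L}U(s)$ is dominated by $2p(1+|Y(s)|^2)^{p-1}\big[\langle Y(s),\tilde\mu(\bar Y(s))\rangle+\frac{2p-1}{2}\sum_{j_1}|\tilde\Theta_{j_1}(s)|^2\big]$ together with curvature terms that are themselves $\leq C(1+|Y(s)|^2)^{p-1}\sum_{j_1}|\tilde\Theta_{j_1}(s)|^2$.

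The core step is to extract the Khasminskii-controlled piece. I replace $Y(s)$ by $\bar Y(s)$ in the drift inner product and expand $|\tilde\Theta_{j_1}|^2$ into the square $|\tilde\s_{j_1}(\bar Y)|^2$, a cross term $2\langle\tilde\s_{j_1}(\bar Y),\sum_{j_2}L^{j_1}\tilde\s_{j_2}(\bar Y)\Delta B^{j_2}(s)\rangle$, and a pure Milstein term $|\sum_{j_2}L^{j_1}\tilde\s_{j_2}(\bar Y)\Delta B^{j_2}(s)|^2$. Applying Lemma \ref{preservecon} with $\overline{p}=p$ (and $\frac{2p-1}{2}\leq 2p-1$) gives $\langle\bar Y,\tilde\mu(\bar Y)\rangle+\frac{2p-1}{2}|\tilde\s(\bar Y)|^2\leq 2\lambda_2(1+|\bar Y|^2)$; then $1+|\bar Y|^2\leq 2(1+|Y|^2)+2|Y-\bar Y|^2$ converts this into a multiple of $U(Y)$ plus a term involving $|Y-\bar Y|^2$ to be absorbed. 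All discrepancies between $\bar Y(s)$ and $Y(s)$, both here and in the drift, are handled by the local-increment estimate $\E|Y(s)-\bar Y(s)|^{2p}\leq C\big(\Delta^p(h(\Delta))^{2p}\vee\Delta^{2p}(h(\Delta))^{4p}\big)$, which follows from the boundedness (\ref{hdeltabdfgG}) of the truncated coefficients together with the BDG inequality, and which is uniformly bounded (indeed $\to0$) by virtue of $\Delta^{1/4}h(\Delta)\leq\overline{h}$ in (\ref{deltahdelta}); repeated Young inequalities then absorb every such term into $\epsilon\,\E U(Y(s))+C$.

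The main obstacle is the Milstein correction terms carrying the factors $\Delta B^{j_2}(s)$, since the crude bounds $|L^{j_1}\tilde\s_{j_2}|\leq C(h(\Delta))^2$ and $h(\Delta)\leq\overline{h}\Delta^{-1/4}$ are by themselves too weak to control the cross term after integration. The remedy is to exploit conditional mean-zero: on $[t_k,t_{k+1})$ I split $(1+|Y(s)|^2)^{p-1}=(1+|\bar Y(s)|^2)^{p-1}+R(s)$. Against the $\F_{t_k}$-measurable factor $(1+|\bar Y|^2)^{p-1}$ the odd cross term integrates to zero because $\E[\Delta B^{j_2}(s)\mid\F_{t_k}]=0$, while the pure Milstein term contributes only $(1+|\bar Y|^2)^{p-1}\,C(h(\Delta))^4\,\E[|\Delta B(s)|^2\mid\F_{t_k}]$, which is bounded using $(h(\Delta))^4\Delta\leq\overline{h}^4$. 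The remainder $R(s)=O(|Y(s)-\bar Y(s)|)$ times a polynomial in $|Y|,|\bar Y|$ is paired with the small factors above and dispatched by the local-increment estimate and Young's inequality.

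Collecting all terms yields $\E U(Y(t))\leq C(1+\E|Y(0)|^{2p})+C\int_0^t\big(1+\E U(Y(s))+\E U(\bar Y(s))\big)\,ds$. Since $\bar Y(s)=Y(t_k)$ on $[t_k,t_{k+1})$ we have $\E U(\bar Y(s))\leq\sup_{u\leq s}\E U(Y(u))$, so writing $\phi(t)=\sup_{u\leq t}\E U(Y(u))$ gives $\phi(t)\leq C(1+\E|Y(0)|^{2p})+C\int_0^t\phi(s)\,ds$, and Gronwall's inequality closes the estimate. Crucially, every constant depends only on $p,T,\overline{h},\lambda_2,d,m$ and not on $\Delta$, which delivers the uniformity $\sup_{0<\Delta\leq\Delta^*}\sup_{0\leq t\leq T}\E|Y(t)|^{2p}\leq K(1+\E|Y(0)|^{2p})$.
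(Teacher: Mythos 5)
Your proposal is correct and takes essentially the same route as the proof this paper relies on: the paper omits the argument for Lemma \ref{TMbound} and defers to \cite{GLMY2018}, where the bound is obtained by exactly this It\^o--Lyapunov--Gronwall argument on $(1+|Y(t)|^2)^p$, combining the preserved Khasminskii-type condition (Lemma \ref{preservecon}), the increment estimate of Lemma \ref{YYbar} (uniform in $\Delta$ via $\Delta^{1/4}h(\Delta)\leq \overline{h}$), and the conditional mean-zero cancellation $\E\big[\Delta B^{j_2}(s)\,\big|\,\F_{t_k}\big]=0$ against the $\F_{t_k}$-measurable factor to kill the Milstein cross term. Your observation that the crude bound $|L^{j_1}\tilde{\sigma}_{j_2}|\leq C(h(\Delta))^2$ is by itself too weak, together with the fix of splitting $(1+|Y(s)|^2)^{p-1}$ into its $\F_{t_k}$-measurable part plus a remainder controlled by $|Y(s)-\bar{Y}(s)|$, is precisely the device used there, so your reconstruction matches the intended proof.
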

The lemma below gives the difference between the two continuous versions of the truncated Milstein method.
\begin{lemma}\label{YYbar}
	For any $\Delta \in (0,\Delta^{*}]$, and any $p \geq 1$,
	\begin{equation*}
	\E |Y(t) - \bar{Y}(t)|^{2p} \leq C \Delta^{p}(h(\Delta))^{2p},
	\end{equation*}
	where $C$ is a positive constant independent of $\Delta$.
\end{lemma}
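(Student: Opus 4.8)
The plan is to start from the definition (\ref{continuousTM}) and subtract $\bar{Y}(t)$, so that for $t\in[t_k,t_{k+1})$ the difference $Y(t)-\bar{Y}(t)$ equals the sum of three terms: a Lebesgue integral of the truncated drift $\tilde{\mu}$, an It\^o integral against $\tilde{\s}$, and the Milstein double-integral term carrying $L^{j_1}\tilde{\s}_{j_2}$. I would then apply the elementary inequality $|a+b+c|^{2p}\leq 3^{2p-1}(|a|^{2p}+|b|^{2p}+|c|^{2p})$ and estimate each term separately. The structural feature that keeps every estimate elementary is that, by (\ref{hdeltabdfgG}), the truncated coefficients are uniformly bounded, $|\tilde{\mu}(x)|\ve|\tilde{\s}(x)|\leq h(\Delta)$, whence $|L^{j_1}\tilde{\s}_{j_2}(x)|=|\sum_{l=1}^d\tilde{\s}_{l,j_1}(x)\tilde{G}_{j_2}^{l}(x)|\leq d(h(\Delta))^2$; in particular no moment bound on $\bar{Y}$ is required, so this lemma is self-contained.

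For the drift term I would use H\"older (Jensen) in time: since the interval length is at most $\Delta$ and $|\tilde{\mu}|\leq h(\Delta)$, its $2p$-th moment is bounded by $\Delta^{2p}(h(\Delta))^{2p}$, which is at most $\Delta^p(h(\Delta))^{2p}$ for $\Delta\in(0,1]$. For the diffusion term I would apply the Burkholder--Davis--Gundy inequality followed by H\"older, giving $\E|\int_{t_k}^t\tilde{\s}(\bar{Y}(s))\,dB(s)|^{2p}\leq C\,\E(\int_{t_k}^t|\tilde{\s}(\bar{Y}(s))|^2\,ds)^p\leq C(h(\Delta))^{2p}\Delta^p$; this is the dominant contribution and already matches the claimed order exactly.

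The main obstacle is the Milstein correction term, which carries the extra factor $\Delta B^{j_2}(s)=B^{j_2}(s)-B^{j_2}(t_k)$ inside the stochastic integral. Here I would again invoke the Burkholder--Davis--Gundy inequality together with the bound $|L^{j_1}\tilde{\s}_{j_2}|\leq d(h(\Delta))^2$ to reduce matters to estimating $\E(\int_{t_k}^t|\Delta B^{j_2}(s)|^2\,ds)^p$; using H\"older in time and the Gaussian moment bound $\E|\Delta B^{j_2}(s)|^{2p}\leq C(s-t_k)^p$, this quantity is controlled by $C\Delta^{2p}$, so the whole term is bounded by $C(h(\Delta))^{4p}\Delta^{2p}$. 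The last step, and the only place where the relaxed step-size condition (\ref{deltahdelta}) enters, is to collapse this into the target order: since $\Delta^{1/2}h(\Delta)=\Delta^{1/4}(\Delta^{1/4}h(\Delta))\leq\overline{h}$ for $\Delta\in(0,1]$, one has $\Delta^p(h(\Delta))^{2p}\leq\overline{h}^{2p}$, and therefore $(h(\Delta))^{4p}\Delta^{2p}=\Delta^p(h(\Delta))^{2p}\cdot\Delta^p(h(\Delta))^{2p}\leq\overline{h}^{2p}\,\Delta^p(h(\Delta))^{2p}$. Summing the three bounds yields the assertion with a constant $C$ depending only on $p,d,m,T$ and $\overline{h}$ but independent of $\Delta$; since the estimate is uniform in $t\in[t_k,t_{k+1})$ and in $k$, it holds for all $t$.
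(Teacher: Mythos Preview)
Your proof is correct and follows essentially the same approach that the paper defers to (the argument of \cite{GLMY2018}, which the paper invokes rather than reproducing): split $Y(t)-\bar Y(t)$ into the drift, diffusion, and Milstein correction integrals, bound each using the truncation estimate (\ref{hdeltabdfgG}) together with H\"older and Burkholder--Davis--Gundy, and finally absorb the extra factor $\Delta^p(h(\Delta))^{2p}$ coming from the Milstein term by means of the step-size condition (\ref{deltahdelta}). Your observation that no moment bound on $\bar Y$ is needed---because the truncated coefficients are uniformly bounded by $h(\Delta)$---is exactly the point that makes the argument elementary.
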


Comparing Lemmas \ref{sdepthmoment} and \ref{TMbound} with those assumptions in Section \ref{secmath}, we obtain the next two lemmas.

\begin{lemma} \label{mblem0}
If Assumptions \ref{fgpoly}, \ref{KhasminskiiCond} and \ref{dfgspoly} hold, then for all $p\geq 1$ and $j_1,j_2 = 1, ..., m$,
\begin{equation}
\sup_{0 < \Delta \leq \Delta^*}\sup_{0 \leq t \leq T} \left[ \mathbb{E}|\mu(Y(t))|^{p} \vee \mathbb{E} |\mu'(Y(t))|^{p} \vee \mathbb{E} |\s(Y(t))|^{p} \vee \mathbb{E} |L^{j_1} \s_{j_2}(Y(t))|^p\right] < \infty.
\end{equation}
\end{lemma}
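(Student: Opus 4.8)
The plan is to reduce the claim to the uniform moment bound of Lemma \ref{TMbound} by first extracting polynomial growth estimates for each of the four maps $\mu$, $\mu'$, $\s$ and $L^{j_1}\s_{j_2}$ directly from the standing assumptions. For $\mu$ and $\s$ the bound $|\mu(x)|\ve|\s(x)|\leq \l_1(1+|x|^{r+1})$ is precisely (\ref{fgpolyext}). For $L^{j_1}\s_{j_2}$ I would use Assumption \ref{fgpoly} with $y=0$, which gives
\begin{equation*}
|L^{j_1}\s_{j_2}(x)|\leq |L^{j_1}\s_{j_2}(0)|+K_1(1+|x|^r)|x|\leq C(1+|x|^{r+1}),
\end{equation*}
and for $\mu'$ I would invoke Assumption \ref{dfgspoly}, which directly yields $|\mu'_l(x)|\leq \l_3(1+|x|^{r+1})$ for each $l$ and hence $|\mu'(x)|\leq C(1+|x|^{r+1})$ after summing over the components. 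Thus all four maps grow at most like $1+|x|^{r+1}$.

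Next I would feed these growth bounds into the expectations. For a representative term,
\begin{equation*}
\E|\mu(Y(t))|^p\leq \l_1^p\,\E\big(1+|Y(t)|^{r+1}\big)^p\leq C\big(1+\E|Y(t)|^{(r+1)p}\big),
\end{equation*}
and the same elementary estimate applies verbatim to $|\mu'(Y(t))|^p$, $|\s(Y(t))|^p$ and $|L^{j_1}\s_{j_2}(Y(t))|^p$. It therefore remains only to control $\E|Y(t)|^{(r+1)p}$ uniformly in $\D$ and $t$.

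This last step is where Lemma \ref{TMbound} enters. Since that lemma holds for every $p\geq 1$, I would fix an integer $\tilde p\geq 1$ with $2\tilde p\geq (r+1)p$ and use the elementary inequality $|Y(t)|^{(r+1)p}\leq 1+|Y(t)|^{2\tilde p}$ to obtain
\begin{equation*}
\sup_{0<\D\leq\D^*}\sup_{0\leq t\leq T}\E|Y(t)|^{(r+1)p}\leq 1+\sup_{0<\D\leq\D^*}\sup_{0\leq t\leq T}\E|Y(t)|^{2\tilde p}\leq 1+K\big(1+\E|Y(0)|^{2\tilde p}\big)<\8.
\end{equation*}
Combining this with the displayed growth estimates shows that each of the four expectations is bounded by a constant independent of $\D$ and $t$, and taking the maximum of four finite quantities yields the assertion. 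The argument is essentially routine; the only point requiring a little care is verifying that $L^{j_1}\s_{j_2}$ and $\mu'$ genuinely inherit the order-$(r+1)$ growth (via the $y=0$ specialisation of Assumption \ref{fgpoly} and via Assumption \ref{dfgspoly}, respectively), since these are not among the maps for which a growth bound was recorded explicitly in (\ref{fgpolyext}).
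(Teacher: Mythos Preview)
Your argument is correct and is precisely the approach the paper indicates: it states only that the lemma follows by ``comparing Lemma~\ref{TMbound} with those assumptions in Section~\ref{secmath}'', i.e., combining the polynomial growth of $\mu$, $\mu'$, $\s$, $L^{j_1}\s_{j_2}$ with the uniform moment bound on $Y(t)$. Your write-up simply fills in the routine details the paper omits.
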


\begin{lemma} \label{mblem}
	If Assumptions \ref{fgpoly} and \ref{KhasminskiiCond} hold, then for all $p \geq 1$ and $j = 1, ..., m$,
	\begin{equation} \label{d7}
	\sup_{0 \leq t \leq T} \left[ \mathbb{E}|x(t))|^{p} \vee \mathbb{E} |\mu(x(t))|^{p} \vee \mathbb{E} |\s_j(x(t))|^{p} \right]<\infty.
\end{equation}
\end{lemma}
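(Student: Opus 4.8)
The plan is to obtain the two coefficient moment bounds by feeding the moment estimate of the exact solution through the polynomial growth of $\mu$ and $\s$, so that Lemma \ref{mblem} reduces to the already-available bound for $x(t)$ together with the growth inequality (\ref{fgpolyext}).

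First I would invoke Lemma \ref{sdepthmoment}, which under Assumption \ref{KhasminskiiCond} gives $\E|x(t)|^{2p}\le K(1+|x(0)|^{2p})$ for \emph{every} $p\ge 1$. In particular, for any fixed finite exponent $q\ge 1$ one has $\sup_{0\le t\le T}\E|x(t)|^{q}<\8$, since $q\le 2p$ for $p$ large enough and a lower moment is dominated by a higher one via Jensen's inequality. This immediately handles the first term $\E|x(t)|^{p}$ in (\ref{d7}).

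Next, recall that Assumptions \ref{fgpoly} and \ref{KhasminskiiCond} already yield the polynomial growth bound (\ref{fgpolyext}), namely $|\mu(x)|\ve|\s(x)|\le\l_1(1+|x|^{r+1})$ for all $x\in\RR^d$. Evaluating this at $x=x(t)$, raising to the power $p$ and using the elementary inequality $(a+b)^{p}\le 2^{p-1}(a^{p}+b^{p})$, I would estimate
\begin{equation*}
\E|\mu(x(t))|^{p}\le\l_1^{p}\,\E\big(1+|x(t)|^{r+1}\big)^{p}\le C\big(1+\E|x(t)|^{(r+1)p}\big).
\end{equation*}
Since $(r+1)p$ is a fixed finite exponent, the first step bounds $\E|x(t)|^{(r+1)p}$ uniformly on $[0,T]$, whence $\sup_{0\le t\le T}\E|\mu(x(t))|^{p}<\8$. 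Because $|\s_j(x)|\le|\s(x)|$ for each $j$, the identical chain of inequalities gives $\sup_{0\le t\le T}\E|\s_j(x(t))|^{p}<\8$.

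Finally, since $a\ve b\ve c\le a+b+c$ for nonnegative reals, the quantity inside the supremum in (\ref{d7}) is dominated by the sum of the three expectations just bounded, and taking the supremum over $t\in[0,T]$ yields the claim. I do not expect a genuine obstacle here: the only point to watch is that the growth bound inflates the required moment order from $p$ to $(r+1)p$, but Lemma \ref{sdepthmoment} supplies moments of arbitrary order, so every exponent that appears is finite and controlled.
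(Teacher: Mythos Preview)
Your proposal is correct and mirrors exactly what the paper does: the paper omits a written proof of this lemma, merely stating that it follows by ``comparing Lemma \ref{sdepthmoment} \ldots\ with those assumptions in Section \ref{secmath},'' which is precisely your argument of feeding the arbitrary-order moment bound for $x(t)$ through the polynomial growth estimate (\ref{fgpolyext}). There is nothing to add.
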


Let us brief a version of the deterministic Taylor formula.
 If a function  $f:\mathbb{R}^d \rightarrow \mathbb{R}^d$ is twice differentiable, then the following Taylor formula
\begin{equation}\label{taylorformula1}
	\begin{split}
		&f(x)-f(x^*) = f'(x^*)(x-x^*) + R_1(f)
	\end{split}
\end{equation}
holds, where $R_1(f)$ is the remainder term
\begin{equation}\label{R1}
	\begin{split}
		R_1(f) =& \int_0^1(1-\varsigma) f''(x^*+\varsigma(x-x^*))(x-x^*,x-x^*) d\varsigma.
	\end{split}
\end{equation}
For any $x, h_1,h_2 \in \mathbb{R}^d$, the derivatives have the following expressions
\begin{equation}\label{derivative}
	f'(x)(h_1) = \sum_{i=1}^{d} \frac{\partial f}{\partial x^{i}}h_1^{i}, \quad
	f''(x)(h_1,h_2) = \sum_{i,j=1}^{d} \frac{\partial^2 f}{\partial x^{i}\partial x^{j}}h_1^{i}h_2^{j}.
\end{equation}
Here,
\begin{equation*}
\frac{\partial f}{\partial x^{i}} = \left(\frac{\partial f_1}{\partial x^{i}}, \frac{\partial f_2}{\partial x^{i}}, ..., \frac{\partial f_d}{\partial x^{i}}  \right), \quad f = (f_1, f_2, ..., f_d).
\end{equation*}
Replacing $x$ and $x^*$ in \eqref{taylorformula1} by $Y(t)$ and $\bar{Y}(t)$, respectively, from \eqref{continuousTM} we have
\begin{equation}\label{taylorformula2}
	\begin{split}
		f(Y(t))-f(\bar{Y}(t))  = f'(\bar{Y}(t))\big(\sum_{j=1}^m \int_{t_k}^{t} \tilde{\s}_j(\bar{Y}(s)) dB^j(s))\big) + \tilde{R}_1(f),
	\end{split}
\end{equation}
where
\begin{equation}\label{RT1}
	\begin{split}
		\tilde{R}_1(f)=f'(\bar{Y}(t)) \Big(\int_{t_k}^{t} \tilde{\mu}(\bar{Y}(s))ds + \sum_{j_1=1}^m \int_{t_k}^t \sum_{j_2=1}^m L^{j_1}\tilde{\s}_{j_2} (\bar{Y}(s)) \Delta B^{j_2}(s)  d B^{j_1}(s)\Big) +R_1(f).
	\end{split}
\end{equation}
By (\ref{Lg}) and (\ref{derivative}), we find
\begin{equation}\label{eq:sigmac}
	\tilde{\s}_{i}'(x) \big(\tilde{\s}_{j}(x) \big)= L^{j}\tilde{\s}_{i}(x).
\end{equation}
Therefore, by \eqref{eq:sigmac},  replacing $f$ in \eqref{taylorformula2} by $\s_{i}$ gives
\begin{equation} \label{eq:sigmaTaylor}
	\begin{split}
		\tilde{R}_1(\s_{i}) = \s_{i}({Y}(t))-\s_{i}(\bar{Y}(t))-\sum_{j=1}^{m}L^{j}\s_{i}(\bar{Y}(t))\Delta B^{j}(t)
	\end{split}
\end{equation}
for $t_k\leq t<t_{k+1}$.
\par
Now we give some estimates on the residues.
\begin{lemma} \label{Rlem}
If Assumptions \ref{fgpoly}, \ref{KhasminskiiCond} and \ref{dfgspoly} hold, then for $i =1,2,..., m$ and all $p \geq 1$,
	\begin{equation} \label{R_estimate}
		\E|\tilde{R}_1(\mu) |^p \vee \E|\tilde{R}_1(\s_{i})|^p \vee \E|\tilde{R}_1(\tilde \s_{i})|^p \leq C \Delta^p(h(\Delta))^{2p},
	\end{equation}
	where $C$ is a positive constant independent of $\Delta$.
\end{lemma}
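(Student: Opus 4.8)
The plan is to bound $\E|\tilde R_1(f)|^p$ for each of the three choices $f=\mu$, $f=\s_i$, $f=\tilde\s_i$ by returning to the explicit form \eqref{RT1}, namely $\tilde R_1(f)=A_1+A_2+R_1(f)$ with $A_1=f'(\bar Y(t))\int_{t_k}^{t}\tilde\mu(\bar Y(s))\,ds$, $A_2=f'(\bar Y(t))\sum_{j_1,j_2}\int_{t_k}^{t}L^{j_1}\tilde\s_{j_2}(\bar Y(s))\,\Delta B^{j_2}(s)\,dB^{j_1}(s)$, and $R_1(f)$ the second-order Taylor remainder from \eqref{R1}. Since $\tilde R_1(f)$ is a sum of three terms, the elementary inequality $|a+b+c|^p\le 3^{p-1}(|a|^p+|b|^p+|c|^p)$ reduces the claim to estimating $\E|A_1|^p$, $\E|A_2|^p$, $\E|R_1(f)|^p$ separately and showing each is $\le C\Delta^p(h(\Delta))^{2p}$. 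Throughout I would use three standing facts: the uniform bound $|\tilde\mu|\vee|\tilde\s|\vee|\tilde G_j^l|\le h(\Delta)$ from \eqref{hdeltabdfgG} (which also yields $|L^{j_1}\tilde\s_{j_2}|\le C(h(\Delta))^2$), the polynomial growth of the first and second derivatives from Assumption \ref{dfgspoly}, and the uniform-in-$\Delta$ moment bounds of $\bar Y(t)$ and $Y(t)$ from Lemma \ref{TMbound}; together the last two guarantee that $\E|f'(\bar Y(t))|^q$ and $\E|f''(\cdot)|^q$ are finite uniformly in $\Delta$ and $t\in[0,T]$ for every $q\ge1$.

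The drift term $A_1$ is the mildest: since $|\tilde\mu|\le h(\Delta)$ and $t-t_k\le\Delta$, the integral is bounded pathwise by $\Delta h(\Delta)$, so by Hölder $\E|A_1|^p\le (\Delta h(\Delta))^p\,\E|f'(\bar Y(t))|^p\le C\Delta^p(h(\Delta))^p$, which is dominated by the target since $h(\Delta)\ge1$ for small $\Delta$. For the Taylor remainder $R_1(f)$ I would use $|R_1(f)|\le C|f''(\cdot)|\,|Y(t)-\bar Y(t)|^2$, apply Hölder to split off the derivative factor, and invoke Lemma \ref{YYbar}: choosing the exponent so that $(\E|Y(t)-\bar Y(t)|^{4p})^{1/2}\le C\Delta^p(h(\Delta))^{2p}$ and $(\E|f''(\cdot)|^{2p})^{1/2}\le C$ gives $\E|R_1(f)|^p\le C\Delta^p(h(\Delta))^{2p}$ at once.

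The main obstacle is the iterated stochastic integral $A_2$, where the exact power of $\Delta$ must be extracted. Here I would first peel off $f'(\bar Y(t))$ by Hölder, $\E|A_2|^p\le (\E|f'(\bar Y(t))|^{2p})^{1/2}(\E|Z|^{2p})^{1/2}$ with $Z=\sum_{j_1,j_2}\int_{t_k}^{t}L^{j_1}\tilde\s_{j_2}(\bar Y(s))\Delta B^{j_2}(s)\,dB^{j_1}(s)$, and then estimate $\E|Z|^{2p}$ by the Burkholder--Davis--Gundy inequality. Using $|L^{j_1}\tilde\s_{j_2}|\le C(h(\Delta))^2$ reduces the quadratic variation to $\int_{t_k}^{t}|\Delta B^{j_2}(s)|^2\,ds\le \Delta\,\sup_{t_k\le s\le t_{k+1}}|B^{j_2}(s)-B^{j_2}(t_k)|^2$; the Doob/BDG moment bound $\E[\sup_{s\le\Delta}|B(s)|^{2p}]\le C\Delta^p$ then yields $\E(\int_{t_k}^t|\Delta B^{j_2}(s)|^2\,ds)^p\le C\Delta^{2p}$, so that $\E|Z|^{2p}\le C(h(\Delta))^{4p}\Delta^{2p}$ and finally $\E|A_2|^p\le C\Delta^p(h(\Delta))^{2p}$. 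This is the term that produces the exact rate $\Delta^p(h(\Delta))^{2p}$, and tracking the cancellation between the extra $\Delta^{1/2}$ coming from $\Delta B^{j_2}(s)$ inside the integral and the $\Delta^{1/2}$ from the outer $dB^{j_1}$ is the delicate bookkeeping.

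Finally, for $f=\s_i$ and $f=\tilde\s_i$ the same three-term decomposition applies verbatim; the only extra care is the derivative factors. For $f=\s_i$ one may alternatively use the closed form \eqref{eq:sigmaTaylor} and bound $\s_i(Y(t))-\s_i(\bar Y(t))-\sum_j L^j\s_i(\bar Y(t))\Delta B^j(t)$ by the same combination of Assumptions \ref{fgpoly}--\ref{dfgspoly}, Lemma \ref{YYbar} and BDG. For $f=\tilde\s_i$ the point to check is that the derivatives of the truncated coefficient still obey the polynomial growth needed for the moment bounds above; since $\pi_\Delta$ is a radial contraction with $|\pi_\Delta'|\le1$ and $|\pi_\Delta(x)|\le|x|$, one obtains $|\tilde\s_i'(x)|\le|\s_i'(\pi_\Delta(x))|\le\l_3(1+|x|^{r+1})$, which is all that the estimates use. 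I would expect this last case to require the mildest smoothing/approximation argument to handle the kink of $\pi_\Delta$ at $|x|=\omega^{-1}(h(\Delta))$, exactly as in \cite{GLMY2018}, but it introduces no new rate.
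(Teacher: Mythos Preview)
The paper actually omits this proof entirely, stating that it ``could be proved by closely following those approaches in \cite{GLMY2018}'', so there is no in-paper argument to compare against. Your three-term decomposition $\tilde R_1(f)=A_1+A_2+R_1(f)$, with the drift piece handled by the pathwise bound $|\tilde\mu|\le h(\Delta)$, the iterated stochastic integral by BDG together with $|L^{j_1}\tilde\s_{j_2}|\le C(h(\Delta))^2$ and the moment bound on $\sup|\Delta B|$, and the Taylor remainder by Lemma~\ref{YYbar} plus the polynomial growth of $f''$, is exactly the standard route taken in \cite{GLMY2018} and is correct; your caveat about the kink of $\pi_\Delta$ when $f=\tilde\s_i$ is the one genuine technical point and is handled there precisely as you anticipate.
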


\section*{Acknowledgements}

The authors would like to thank
the National Natural Science Foundation of China (11701378), ``Chenguang Program'' supported by both Shanghai Education Development Foundation
and Shanghai Municipal Education Commission (16CG50),and Shanghai Pujiang Program (16PJ1408000)
for their financial support.


\end{document}